\documentclass[12pt]{amsart}
\usepackage{amsfonts,amssymb,eucal}

\usepackage{amsthm}
 \usepackage{amsmath}
 \usepackage{latexsym}
\usepackage{bbold,mathbbol,bbm}
\usepackage{color}

\definecolor{orange}{RGB}{255,127,0}

\definecolor{blue}{RGB}{17,84,156}

\numberwithin{equation}{section}
\newcommand{\ep}{\ensuremath{\epsilon\,}}

\newcommand{\e}{{\mbox{\rm e}}}

\newcommand{\mb}[1]{{\mbox{\boldmath{$#1$}}}}
\newcommand{\mc}[1]{{\mathcal{#1}}}
\newcommand{\got}[1]{{\mathfrak{#1}}}
\newcommand{\db}[1]{{\mathbb{#1}}}
\newcommand{\pa}{\partial}

\newcommand{\R}{\ensuremath{\mathbb{R}}}\newcommand{\C}{\ensuremath{\mathbb{C}}}
 \newcommand{\N}{\ensuremath{\mathbb{N}}}

\newtheorem{Remark}{Remark}

\newtheorem{Proposition}{Proposition}
\newtheorem{lemma}{Lemma}

\newtheorem{Comment}{Comment}
\theoremstyle{definition}
\def\ii{\operatorname{i}}
\newcommand{\Ka}{K\"ahler}
\newcommand{\mr}[1]{{\mathrm{#1}}}
\newcommand{\dd}{\operatorname{d}}

\newcommand{\un}{\ensuremath{\mathbb{1}_n}}
\newcommand{\on}{\ensuremath{\db{0}_n}}

\newcommand{\h}{\ensuremath{\got{h}}}
\renewcommand{\Re}{\operatorname{Re}}
\renewcommand{\Im}{\operatorname{Im}}
\allowdisplaybreaks

\begin{document}
\title[Connection matrices and Berry phases]{Connection matrices and Berry phases \\on the homogeneous spaces $\mc{D}^J_1$ and $\mc{X}^J_1$ \\
 attached to Jacobi groups}
\enlargethispage{1cm}
\author{Elena Mirela Babalic, Stefan  Berceanu}

\address{''Horia Hulubei'' National
 Institute for Physics and Nuclear Engineering\\
Department of Theoretical Physics\\
PO BOX MG-6, Bucharest-Magurele, Romania}
 \email{mbabalic@theory.nipne.ro, Berceanu@theory.nipne.ro}

\begin{abstract}
We analyze the relation between connection matrices and Berry phases on the Siegel-Jacobi
disk $\mc{D}^J_1$ and Siegel-Jacobi upper half-plane $\mc{X}^J_1$ and we explicitly compute the
connection matrices and the Berry connections on these homogeneous spaces in different coordinates.
\end{abstract}
\subjclass{81Q70,53C05,37J55,53D15}
\keywords{Berry phase, Berry connection,  Connection matrix,  Jacobi group, invariant metric,
Siegel--Jacobi disk, Siegel--Jacobi upper half-plane,  extended Siegel--Jacobi upper half-plane,
almost cosymplectic manifold}
\maketitle
\tableofcontents
\newpage

\section{Introduction}

The complex  Jacobi group \cite{bs,ez} of degree $n$ is defined as the
semi-direct product $G^J_n=\mr{H}_n\rtimes\text{Sp}(n,\R)_{\C}$,
where $\mr{H}_n$ denotes the $(2n+1)$-dimensional Heisenberg group \cite{sbj,nou,Y02}.
To the Jacobi group $G^J_n $ it is associated  a homogeneous manifold, called the
Siegel-Jacobi  ball  $\mc{D}^J_n$ \cite{sbj},  whose points are in
$\C^n\times\mc{D}_n$, i.e. a partially-bounded space \cite{Y08,Y10}.   $\mc{D}_n$ denotes the
Siegel (open)  ball  of degree $n$. The non-compact Hermitian symmetric space
$ \operatorname{Sp}(n, \R )_{\C}/\operatorname{U}(n)$ admits a matrix realization
as a  homogeneous bounded domain \cite{helg}:
\[
  \mc{D}_n:=\{W\in  MS (n, \C ): \un-W\bar{W}>0\}.
  \]
The real Jacobi group of degree $n$ is defined as $G^J_n(\R):={\rm Sp}(n,\R)\ltimes \mr{H}_n$,
where $\mr{H}_n=\mr{H}_n(\R)$ is the real $(2n+1)$-dimensional Heisenberg group.
${\rm Sp}(n,\R)_{\C}$~and~$G^J_n$ are isomorphic to~${\rm Sp}(n,\R)$
and~$G^J_n(\R)$ respectively as real Lie groups, see \cite[Proposition~2]{nou}.

The invariant metric on the Siegel-Jacobi upper half-plane  on
$\mc{X}^J_1=\frac{G^J_1(\R)}{{\rm SO}(2)\times\R}\approx
\mc{X}_1\times\R^2$ \cite{jac1,BER77,FC,SB14} was obtained previously by
Berndt~\cite{bern84,bern} and K\"ahler~\cite{cal3,cal}.

 We determined the invariant metric on a five-dimensional homogeneous manifold
 ${\tilde{\mc{X}}^J_1}=\frac{G^J_1(\R)}{{\rm SO}(2)}\approx\mc{X}_1\times\R^3$ \cite{SB19},
 called the extended Siegel--Jacobi upper half-plane. The results of \cite{SB19}
concerning $\tilde{\mc{X}}^J_1$ have been generalized  in \cite{SB20N} to  the extended
Siegel-Jacobi upper half space
$\tilde{\mc{X}}^J_n=\frac{G^J_n(\R)}{\text{U}(n)}\approx
\mc{X}^J_n\times \R$, ~$\mc{X}^J_n\approx\C^n\times \mc{X}_n  $,
~$\mc{X}_n=\frac{\mr{Sp}(n,\R)}{\mr{U}(n)}$,~ $\N\ni n>1$.

 We recall that on homogeneous \Ka~ manifolds the Hamilton  equations  of motion
 and the Berry phase were  simultaneously  investigated
  in  \cite{sbcag,sbl,FC}, see also \cite{GO}.
  In the present paper we are interested in the same problem  of studying
  the  Berry phase on odd-dimensional manifolds, where several
 geometric structures can be introduced
 \cite{BL,BL2,boy,boga,can,Capp,MLEO,LI,sas}, see also a brief review
 in \cite[Appendix]{SB22}. In our paper \cite{SB22} we investigated
 Hamiltonian systems on manifolds with almost cosymplectic structure
 in the sense of \cite{paul}.

We recall here our interest in finding a geometric significance for the phase
 of the scalar product of coherent states \cite{perG,lis1,neeb}.
The answer to this question was given  by Pancharatnam for the Poincar\'e sphere
 \cite{Pan,swA}, see also \cite{aa}  and \cite[Proposition 5.1]{ma}
 in the language of holonomy of a loop in the projective
 Hilbert space,  and  by Perelomov \cite[page 63]{perG} for the  sphere
 $\text{S}^2=\frac{\text{SU}(2)}{\text{U}(1)}$.
    A general answer to this question using the coherent state embedding
    and the so called ''Cauchy formulas'' was given in \cite{SB2000} and \cite{SBS}.
    We studied this problem also in \cite{SB99}--\cite{SB2003}.
    Explicit calculation was presented for the compact Grassmann manifold
$G_n(\C^{n+m})=\frac{\text{SU}(n+m)}{\text{S}(\text{U}(n)\times
  \text{U}(m))}$  in \cite{SB2000}, where it was proved that
  {\it  the phase of the scalar product of two coherent states is twice
  the symplectic area of a geodesic triangle determined by the corresponding
points on the manifold and the origin of the system of coordinates},  see
 also \cite[Theorem 2.1]{clerc}. The same result is also true for the
 noncompact dual
 $\frac{\text{SU}(n,m)}{\text{S}(\text{U}(n)\times  \text{U}(m))}$ of
the compact Grassmann manifold \cite{SB99,SB99b}. In \cite{FC}  the change
of coordinates $x\rightarrow z $ in \eqref{2.1} was called
$FC$-transform  ($FC$ meaning fundamental conjecture, \cite{GV,pia,DN}). In \cite[Remark 3]{sbl} we observed that:
\[
{\text{  For~symmetric manifolds the  FC-transform   gives
  geodesics.\qquad (A)}}
\]
In \cite[Remark 1]{SB97}
we underlined that assertion (A) is true for a class of manifolds which
includes the naturally reductive spaces \cite{nomizu,atri},
\cite[page 202]{kn}. We have considered the
sequence of manifolds:
\begin{center}
  Hermitian symmetric spaces $\subset$ symmetric $\subset $ naturally
  reductive $\subset$ g. o. spaces
\end{center}
We have shown in \cite[Proposition 5.8]{SB19} that $\mc{X}^J_1$ is not
naturally reductive with respect to the balanced metric \cite{don,arr,alo}.

We recall that the Berry phase is an important object in the study of
geometric phase physics \cite{BS,swA, GO,DP}.
We have studied the Berry phase on homogeneous \Ka~ manifolds in
\cite{sbcag, sbl, FC}.

The paper is laid out as follows.   In Section  \ref{PR}  we recall the \Ka~
two-form on $\mc{D}^J_1$ and its two-parameter balanced metric image
on $\mc{X}^J_1$ is obtained by  the partial Cayley transform in Proposition \ref{PRFC}.
Section \ref{BEP} recalls our investigation of Berry phases and connection matrices
on \Ka~manifolds, and we compute the explicit formulas of connection matrices and
Berry connections on $\mc{D}^J_1$ and $\mc{X}^J_1$.

Proposition \ref{PRFC} and  Comment \ref{CM1} are improved versions
of older results, while Remark \ref{RRR} compares our approach
of the Berry phase on \Ka~ manifolds \cite{sbcag,sbl,FC}
 with the geometric phase in \cite{swA,BS,CH}.
 The new relevant  results presented in this paper are, in \S ~\ref{33},
 formulas \eqref{thetam} and \eqref{CMexp} of connection matrix $\theta_{\mc{X}^J_1}(x,y,q,p)$, formula
\eqref{ABXY} of the Berry connection on $\mc{X}^J_1$ expressed in
$(u,v)=(m+\ii n, x+\ii y)$, formulae \eqref{ABXYY}, \eqref{AA1}, \eqref{AA2} of the Berry phase in $(w,v)=(\alpha+\ii \beta, x+\ii y)$,
formula \eqref{EQ11} for the Berry connection in $(x,y,q,p)$.

This paper is a shortened and improved version of the preprint \cite{berry24}.

\

\textbf{Notation.}
We denote by $\mathbb{R}$, $\mathbb{C}$, $\mathbb{Z}$ and $\mathbb{N}$
 the field of real numbers, the field of complex numbers,
the ring of integers,   and the set of non-negative integers, respectively. We denote the imaginary unit
$\sqrt{-1}$ by~$\ii$, the real and imaginary parts of a complex
number $z\in\C$ by $\Re z$ and $\Im z$ respectively, and the complex
conjugate of $z$ by $\bar{z}$.
 We denote by ${\dd }$ the differential.
We use Einstein's summation convention, i.e.  repeated indices are
implicitly summed over.  The set of vector fields (respectively 1-forms) on real
manifolds  is denoted
by $\got{D}^1$ (respectively $\got{D}_1$). We denote a mixed tensor
contravariant of degree $r$ and covariant of degree $s$ by $\got{D}^r_s=\got{D}^r\times\got{D}_s$,
where $\got{D}^r=\underbrace{\got{D}^1\times \dots \times
  \got{D}^1}_{r}$ and $\got{D}_s=\underbrace{\got{D}_1\times \dots \times
  \got{D}_1}_{s}$ \cite[pages 13-17]{helg}. If $M$ is a complex manifold we
denote by $\got{A}^{r,s}$ the tensor fields of type $(r,s)$.
  If we denote with Roman  capital letteres the Lie  groups, then their
associated Lie algebras are denoted with the corresponding lower-case
letteres. If $\got{H}$ is a Hilbert space, than we adopt the
  convention  that the scalar product $(\cdot,\cdot)$ on
  $\got{H}\times\got{H}$ is antilinear in the first factor
$(\lambda a,b)=\bar{\lambda}(a,b),~ \lambda \in \C\setminus \{0\}
$. If $\pi$ is a representation of a Lie grup $G$
on the Hilbert $\got{H}$  and $X\in\got{g}$,  then we denote
${\bf X}:=\dd \pi (X)$  \cite{SB03,SB14,perG}. The interior  product $i_X\omega$ (interior multiplication or contraction)
of the differential form $\omega$
with $X\in\got{D}^1$ is denoted $X\lrcorner \,\omega$. We denote by
$M(n,m,\db{F})$ the set of $n\times m$ matrices with elements
in the field $\db{F}$ and  $M(n,\db{F})$ denotes $M(n,n,\db{F})$. If $X\in M(n,m,\db{F})$, then $X^t$ denotes the
transpose of $X$. We denote by $MS(n,\db{F}):=\{X\in M(n,\db{F})|X^t=X\}$. The
 conjugate transpose (or hermitian transpose)  of $ A\in
 M(q,\C)$  is  $A^H:=\bar{A}^t$,  also denoted $A^*$, $A^{\dagger}$, $A^+$.
 If $f$ is a function on $\C^n$, we write for the
total differential of $f$  $\dd f= \pa f+\bar{\pa} f$,
$\pa f=\sum_1^n{\pa_{\alpha}f}\dd z_{\alpha}$,
  where $\pa_{\alpha}f=\frac{\pa f}{\pa z_{\alpha}} $
  \cite[page 6]{GH}. If $f$ is a complex function, then by $f-{cc}$ we mean $f-\bar{f}$.

  \section{Preparation}\label{PR}

We adopt  the notation in  \cite{bs,ez} for the real  Jacobi group   $G^J_1(\R)$,
realized as  submatrices of $\text{Sp}(2,\R)$ of the form
\begin{equation}\label{SP2R}
g=\left(\begin{array}{cccc} a& 0&b &   q\\
\lambda &1& \mu & \kappa\\c & 0& d &  -p\\
         0& 0& 0& 1\end{array}\right),~ M=
    \left(\begin{array}{cc}a&b\\c&d\end{array}\right),~ \det M
   =1, \end{equation}
where
\begin{equation}\label{DEFY}
X:=(\lambda,\mu)~,~Y:=(p,q)=XM^{-1}=(\lambda,\mu) \left(\begin{array}{cc}a&b\\c&d\end{array}\right)^{-1}=(\lambda d-\mu
  c,-\lambda b+\mu a)
  \end{equation}
are related to the Heisenberg group $\rm{H}_1$ described by
$(\lambda,\mu,\kappa)$.  For  coordinatization of  the  real Jacobi
group  we adopt  the so called  $S$-coordinates
$(x,y,\theta,p,q,\kappa)$  \cite{bs}.

Simultaneously with
the Jacobi group $G^J_1(\R)$ consisting of elements $(M,X,\kappa)$, we
considered the restricted real Jacobi  group $G^J(\R)_0$  of elements $(M,X)$ \cite{jac1,SB19}.

  The action  $G^J(\R)_0\times \mc{X}^J_1\rightarrow
  \mc{X}^J_1$    (respectively  $G^J_1(\R)\times \tilde{\mc{X}}^J_1\rightarrow
  \tilde{\mc{X}}^J_1$) in Lemma \ref{LEMN} below  is extracted from \cite[Lemma 5.1]{SB19} and \cite[Lemma  1]{SB20}.

\begin{lemma}\label{LEMN}
Let $v,u\in\C$ be defined as:
\begin{equation}\label{TAUZ}
v:=x+\ii y,\quad \quad
  u:=pv+q=m+\ii n,\quad\quad ~x,y,p,q,m,n\in\R.
\end{equation}
(\Ka~calls $\tilde{\mc{X}}^J_1$ {\it{Phasenraum der Materie}}, where $v$ is
{\it{Pneuma}} and $u$ is {\it {Soma}} \cite{cal3}.)

\noindent Let $\mc{X}^J_1\!\approx\! \mc{X}_1\times\R^2$ be the Siegel--Jacobi upper half-plane,
where $\mc{X}_1\!=\!\{v \!\in\!\C| y:=\!\Im v\!>\!0\}$ is the Siegel upper half-plane,  and
$\tilde{\mc{X}}^J_1\approx\mc{X}^J_1\times\R$ denotes  the extended Siegel--Jacobi upper half-plane.
Then:

a) The action  $G^J(\R)_0\times \mc{X}^J_1\rightarrow
  \mc{X}^J_1$ is defined by:
\begin{equation}\label{AC1}
(M,X)\times
(v',u')=(v_1,u_1),\emph{\text{~where~}}v_1=\frac{a v'+b}{c
  v'+d},~u_1=\frac{u'+\lambda u'+\mu}{c v'+d}.
  \end{equation}

b) If  $u'=p'v'+q'$,~ $v'=x'+\ii y'$ as in \eqref{TAUZ}, then the
action
\begin{equation}\label{AC11}
(M,X)\times (x',y',p',q')=(x_1,y_1,p_1,q_1)
\end{equation}
is given by the formula:
\begin{equation} x_1+\ii y_1=\frac{(ax'+b)(cx'+d)+ac y'^2+\ii
  y'}{(cx'+d)^2+(cy')^2}\label{ALIGNN1},
                                  \end{equation}
and
\begin{equation}\label{AC12}
(p_1,q_1)=(p,q)+(p',q')
\left(\begin{array}{cc}a & b\\c & d\end{array}\right)^{-1}=(p+dp'-cq',q-bp'+aq').
\end{equation}

c)  The action $G^J_1(\R)\times \tilde{\mc{X}}^J_1\rightarrow
  \tilde{\mc{X}}^J_1$ is given by:
\begin{equation}\label{AC2}
\begin{split}
& (M,X,\kappa)\times
 (v',u',\kappa')  =(v_1,u_1,\kappa_1),\\
& (M,X,\kappa)\times (x',y',p',q',\kappa')  =(x_1,y_1,p_1,q_1,\kappa_1),\\
 & \kappa_1  =\kappa +\kappa' +\lambda q'-\mu p',~
(p',q')= (\frac {n'}{y'},m'-\frac{x'}{y'}n'),~
(\lambda,\mu)=(p,q)M,
\end{split}
\end{equation}
together with \eqref{ALIGNN1} and \eqref{AC12}.
\end{lemma}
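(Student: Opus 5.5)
The approach is to realize the action concretely through the embedding \eqref{SP2R} of $G^J_1(\R)$ into $\text{Sp}(2,\R)$, acting on the Siegel upper half-space of degree $2$ by fractional linear transformations, and then read off the components. I would first fix the group law on triples $(M,X,\kappa)$ by multiplying two matrices of the form \eqref{SP2R}. The standard result (cf.\ \cite{bs,ez}) is $(M,X,\kappa)(M',X',\kappa')=(MM',\,XM'+X',\,\kappa+\kappa'+\det\begin{pmatrix}XM'\\ X'\end{pmatrix})$. I would then declare the action on $(v,u)$ to be the standard Jacobi action $v\mapsto \frac{av+b}{cv+d}$, $u\mapsto \frac{u+\lambda v+\mu}{cv+d}$. (Here $\lambda u'$ in \eqref{AC1} should read $\lambda v'$.) Part a) amounts to checking that this is a left action, i.e.\ compatible with the group law. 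Concretely, it is the restriction of $Z\mapsto (AZ+B)(CZ+D)^{-1}$ to matrices $Z=\begin{pmatrix} v & u\\ u & *\end{pmatrix}$, and one tracks only the $(1,1)$ and $(1,2)$ entries. Equivalently, one can verify it directly on the generators $M\in\text{SL}(2,\R)$ and $X\in\R^2$, which is routine.

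For part b), formula \eqref{ALIGNN1} is obtained by multiplying $\frac{av'+b}{cv'+d}$ by $\overline{cv'+d}/\overline{cv'+d}$ and using $ad-bc=1$ to get the imaginary part $y'$. For \eqref{AC12}, I would substitute $u'=p'v'+q'$ into $u_1$ and look for $p_1,q_1$ real with $u_1=p_1v_1+q_1$. The key identity is that any real pair $(\alpha,\beta)$ satisfies
\[\frac{\alpha v'+\beta}{cv'+d}=\alpha' v_1+\beta',\qquad (\alpha',\beta')=(\alpha,\beta)M^{-1}.\]
This holds because $v_1(cv'+d)=av'+b$, so the right side times $(cv'+d)$ equals $(\alpha' a+\beta' c)v'+(\alpha' b+\beta' d)$, i.e.\ $(\alpha',\beta')M$. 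Applying it with $(\alpha,\beta)=(p'+\lambda,q'+\mu)$ gives
\[(p_1,q_1)=(p',q')M^{-1}+XM^{-1}=(p',q')M^{-1}+(p,q)\]
by \eqref{DEFY}. Writing out $M^{-1}=\begin{pmatrix} d&-b\\-c&a\end{pmatrix}$ then yields \eqref{AC12}. Real-valuedness is automatic, and uniqueness follows since $\Im v_1>0$ makes $1,v_1$ linearly independent over $\R$.

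For part c), the $\kappa$-component comes from the group law. One identifies $\tilde{\mc{X}}^J_1$ with $G^J_1(\R)/\text{SO}(2)$ via the section $(x,y,p,q,\kappa)\mapsto$ the element with $S$-coordinates $(x,y,0,p,q,\kappa)$, and multiplies on the left. The $\text{SL}(2)$ part does not alter $\kappa$, and the Heisenberg part contributes the symplectic form $\lambda q'-\mu p'$ between $X$ and the point's $(p',q')$ in the appropriate normalization. The relation $(p',q')=(n'/y',\,m'-x'n'/y')$ is just the inversion of $u'=p'v'+q'$ by taking imaginary and real parts, and $(\lambda,\mu)=(p,q)M$ is \eqref{DEFY}.

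The main obstacle is c). There the exact form of $\kappa_1$ depends on conventions: the coordinates in \cite{bs}, and whether $\kappa$ is shifted by $\frac12 pq$-type terms. I would carefully redo the matrix product \eqref{SP2R} times the coset representative, and check that the $(2,4)$ entry yields $\kappa+\kappa'+\lambda q'-\mu p'$ with the given sign. If it does not, I would adjust the representative, as in \cite{SB19,SB20}, to match.
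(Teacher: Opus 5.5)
Your proposal is correct. The paper gives no proof of its own (it imports the lemma from \cite{SB19,SB20}, which rest on the embedding \eqref{SP2R} and the Berndt--Schmidt composition law), and that is essentially the route you take, including your correction of $\lambda u'$ to $\lambda v'$ in \eqref{AC1}.

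The hedge in c) is unnecessary. The fourth column of $gg'$ is
\[
g\,(q',\kappa',-p',1)^t=\bigl(q-bp'+aq',\ \kappa+\kappa'+\lambda q'-\mu p',\ -(p+dp'-cq'),\ 1\bigr)^t .
\]
This yields \eqref{AC12} and $\kappa_1$ with exactly the stated sign. Right multiplication by $\mathrm{SO}(2)$ leaves that column unchanged, so the coset representative needs no adjustment.
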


Proposition \ref{PRFC} is an improved version of
\cite[(4.38), (5.8)]{SB14}, \cite[(28),~ (29)]{GAB}, \cite[Proposition 2.1]{SB19},
 \cite[Proposition 2]{SB21}, \cite[Proposition 2]{SB22}, \cite[(18),(19)]{BER7}.

Below $(w,z)\in  ( \mc{D}_1,\C)$,
$(v,u)\in (\mc{X}_1,\C)$, and
the parameters $k$ and $\nu$ come from representation theory of the
Jacobi group: $k$ parametrizes the positive discrete series of ${\rm
  SU}(1,1)$, $2k\in\N$, while $\nu>0$ labels the representations of
the Heisenberg group \cite{jac1}.

\begin{Proposition} \label{PRFC}

Perelomov's coherent state vectors associated to the group $G^J_1$
are defined as:
\begin{equation}\label{csu}
e_{z,w}:=e^{\sqrt{\nu}\,z\,{\mb{a}}^{\dagger}+w{\mb{K}}_+}e_0, ~z\in\C,~ |w|<1
\end{equation}
and the reproducing kernel $K = K(\bar{z},\bar{w},z,w)$ is given by:
\begin{equation}\label{hot}
K =\!(e_{z,w},e_{z,w})\!=\!
(1\!-\!w\bar{w})^{-2k}\exp{\Big[\nu\frac{2z\bar{z}\!+\!z^2\bar{w}\!+\!\bar{z}^2w}{2(1-w\bar{w})}\Big]},\quad
z ,w\in\C,~|w|<1 .
\end{equation}
We can write \eqref{hot} in a simplified form as:
\begin{equation}\label{KPF} K\!=\!P^{-2k}\exp{[\nu F]}, ~~~ P:=\!1\!-\!w\bar{w},~~~ F:=\frac{2z\bar{z}\!+\!z^2\bar{w}\!+\!\bar{z}^2w}{2(1-w\bar{w})},~~~z ,w\in\C,~|w|<1 .
\end{equation}

  a) The \Ka~two-form on  $\mc{D}^J_1$, invariant to the action of
  $G^J_1=\rm{SU}(1,1)\ltimes\C$, is
 \begin{equation}\label{kk1}
  -\ii \omega_{\mc{D}^J_1}
(w,z)\!=\!\frac{2k}{P^2}\dd w\wedge\dd
  \bar{w}\!+\!\nu \frac{\mc{A}\wedge\bar{\mc{A}}}{P},\quad \mc{A}=\mc{A}(w,z):=\dd
  z\!+\!\bar{\eta}\dd w.
\end{equation}
We have the change of variables $(w,z)\rightarrow (w,\eta, \bar{\eta})$
\begin{gather}\label{E32}
{\rm FC}\colon \
 z=\eta-w\bar{\eta},\qquad {\rm FC}^{-1}\colon \
 \eta=\frac{z+\bar{z}w}{P},
\end{gather}
\begin{equation}\label{E32a}
{\rm FC}\colon \ \mc{A}(w,z)\rightarrow\mc{A}(w,\eta,\bar{\eta}):= \dd \eta -w\dd
\bar{\eta},
\end{equation}
and thus we get:
\begin{equation}
\label{E32b}
  -\ii \omega_{\mc{D}^J_1}(w,\eta) =
  -\ii {\rm{FC}}^*(\omega_{\mc{D}^J_1}(w,z))=\frac{2k}{P^2}\dd w\wedge\dd
                                 \bar{w}+\nu\dd\eta\wedge\dd
                                    \bar{\eta}.
 \end{equation}

Defining:
\begin{equation}\label{WVAB}
  w:=\alpha+\ii \beta, ~\quad \eta:=q+\ii p,\quad \alpha,\beta, p,q\in \R,
\end{equation}
relation \eqref{E32b} becomes:
\begin{equation}\label{E32bb}
 \omega_{\mc{D}^J_1}(\alpha,\beta,q,p)  =4k\frac{\dd \alpha
                                            \wedge\dd \beta}
   {(1-\alpha^2-\beta^2)^2}+2\nu \dd q\wedge \dd p.
 \end{equation}

With \eqref{hot}, \eqref{KPF} and \eqref{E32} we  have:
\begin{equation}\label{GGG}
  K(w,\bar {w}, \eta,\bar{\eta})=(1-w\bar{w})^{-2k}\exp\Big\{\nu \Big[\eta\bar{\eta}-\frac{\bar{w}\eta^2+w\bar{\eta}^2}{2}\Big]\Big\}.
  \end{equation}

    In \eqref{GGG} we make the change of coordinates $w\rightarrow v$, as in \eqref{210b}, and using $v\!=\!x+\ii y$ and $\eta=q+\ii p$ we have in particular:
\[
  \bar{\eta}^2w=\frac{(q^2-p^2-2\ii qp)(x^2+y^2-1-2\ii x)}{N}.
  \]
  One can easily check that we get the \Ka ~potential:
  \begin{subequations}\label{FXY}
      \begin{align}
   & f(x,y,q,p)=\log K (x,y,q,p) = -2k \log P +\nu F~,\\
   & with~~~P\!=\!\frac{4y}{N},~F\!=\!\frac{2}{N}[(y\!+\!1)q^2\!+\!(x^2\!+\!y^2\!+\!y)p^2\!+\!2qpx],~N\!=\!x^2 \!+\!(y+1)^2.
       \end{align}
    \end{subequations}

    The matrix of the balanced metric $h(z,w)$ associated to the \Ka~two-form
\eqref{kk1} through a formula of the type \eqref{KALP}:
$$ -\ii\omega_{\mc{D}^J_1}(w,z)=h_{z\bar{z}}\dd z \wedge\dd\bar{z}+h_{z\bar{w}}\dd z \wedge\dd\bar{w}-h_{\bar{z}w}\dd \bar{z} \wedge\dd w+h_{w\bar{w}}\dd w \wedge\dd\bar{w},$$
is given by:
\begin{equation}\label{metrica}
  h(z,w) =\left( \begin{array}{cc}h_{z\bar{z}}& h_{z\bar{w}}\\
                         h_{w\bar{z}}& h_{w\bar{w}}\end{array} \right)=
  \left(\begin{array}{cc} \frac{\nu}{P} &  \frac{\nu\eta}{P} \\
\frac{\nu\bar{\eta}}{P} &
\frac{2k}{P^2}+\frac{\nu|\eta|^2}{P}\end{array}\right).
\end{equation}
The inverse of the matrix \eqref{metrica} reads:
\begin{equation}\label{hinv}
h^{-1}(z,w)= \left(\begin{array}{cc}h^{z\bar{z}}&
      h^{z\bar{w}}\\h^{w\bar{z}}&h^{w\bar{w}}\end{array}\right)  =  \left(\begin{array}{cc}
    \frac{P}{\nu}+\frac{P^2|\eta|^2}{2k} & -\frac{P^2\eta}{2k} \\
-\frac{P^2\bar{\eta}}{2k} & \frac{P^2}{2k}\end{array}\right).
\end{equation}

b) The second partial Cayley transform $\Phi_1: \mc{D}^J_1\rightarrow
\mc{X}^J_1$
\begin{equation}\label{PHH1}
  \Phi_1:={\rm{FC}}_1\circ\Phi:  (w,z)\rightarrow
(v=x+\ii y,\eta=q+\ii p)\end{equation}
 and its inverse $\Phi_1 ^{-1}: (v,\eta)\rightarrow (w,z)$ are  given  by
\begin{subequations}\label{ULTRAN}
\begin{align}
 \Phi_1: & ~w=\frac{v-\ii}{v+\ii}, \quad  z=\eta-\bar{\eta}\frac{v-i}{v+\ii}=2\ii \frac{pv+q}{v+\ii},\label{ULTRAN1}\\
\Phi_1 ^{-1}: &~ v=\ii \frac{1+w}{1-w}, \quad \eta =
\frac{(1+\ii \bar{v})(z-\bar{z})+v(\bar{v}-\ii)(z+\bar{z})}{2\ii
  (\bar{v}-v)}=\frac{z+\bar{z}w}{P},\label{ULTRAN2}
\end{align}
\end{subequations}
where we have:
\begin{equation}\label{Pv}
P=1-w\bar{w}=\frac{2\ii (\bar{v}-v)}{(v+\ii)(\bar{v}-\ii)}=\frac{2\ii (\bar{v}-v)}{|v+\ii|^2}.
\end{equation}
Introducing the second partial Cayley transform \eqref{ULTRAN1} and
the change of variables \eqref{E32} into the \Ka ~two-form \eqref{kk1}
on $\mc{D}^J_1$, we get the symplectic two-form \eqref{214a} on the
Siegel-Jacobi upper half-plane described by $(v,\eta)$, where $\Im v>0$:
\begin{subequations}\label{omSYUHP}
  \begin{align}
  -\ii \omega_{\mc{X}^J_1}(v,\bar{v},\eta,\bar{\eta}
    )&\!=\!\frac{8k}{P^2}\frac{\dd v\wedge \dd \bar{v}}{N^2}\!+\!\nu \dd
       \eta\wedge\dd \bar{\eta}=\!-\frac{2k}{(\bar{v}\!-\!v)^2}\dd v\wedge
       \dd \bar{v} \!+\! \nu \dd\eta\wedge\dd \bar{\eta},\label{214a}\\
     \omega_{\mc{X}^J_1}(x,y,q,p )    & =\frac{k}{y^2}\dd x\wedge \dd
                                         y+2\nu \dd q\wedge \dd p,\label{214b}
 \end{align}
 \end{subequations}
in which we used notations:
 \begin{equation}\label{NNN}
  N:=|v+\ii|^2=x^2+(y+1)^2,\quad P=4\frac{y}{N}.
 \end{equation}

 c) Using the partial Cayley transform
 $\Phi^{-1}:\mc{D}^J_1\rightarrow \mc{X}^J_1, ~(w,z)\rightarrow (v,u)$ and its
inverse
\begin{subequations}\label{210}
\begin{align}
\Phi^{-1}: v & =\ii \frac{1+w}{1-w},~~u=\frac{z}{1-w}, ~~w,z\in\C,~
               |w|<1,\label{210a}\\
  \Phi: w & =\frac{v-\ii}{v+\ii},~~z=2\ii
        \frac{u}{v+\ii},~~v,u\in\C,~\Im v>0,\label{210b}
\end{align}
\end{subequations}
and also relation \eqref{kk1} we obtain:
\begin{equation}\label{ALEFT}
\mc{A}\left(\frac{v - \ii}{v+ \ii},\frac{2\ii
      u}{v + \ii}\right)=\frac{2\ii}{v+\ii}\mc{B}(v,u),
\end{equation}
   where:
  \begin{equation}\label{BFR2}
  \mc{B}(v,u) := {\rm d} u - r{\rm d} v, \quad
  r:=\frac{u-\bar{u}}{v-\bar{v}}.
\end{equation}
  The Berndt--\Ka~ two-form (respectively the symplectic two-form), invariant to the action of
$G^J(\R)_0$ $= \rm{SL}(2,\R)\ltimes\C$, is \eqref{BFR} \emph{(}respectively \eqref{BRF}\emph{)} is:
\begin{eqnarray}
&& - \ii \omega_{\mc{X}^J_1}(v,u)  \!=\! -\frac{2k}{(\bar{v} - v)^2} \dd
v\wedge \dd\bar{v}+ \frac{2\nu}{\ii(\bar{v} -
                                 v)}\mc{B}\wedge\bar{\mc{B}} \label{BFR}\\
     &&\quad\quad\quad\quad \!=\!\frac{1}{y}\{(\frac{k}{2y}+\nu r^2)\dd v\wedge \dd
       \bar{v}+\nu[\dd u\wedge \dd \bar{u}-r(\dd v\wedge
       \dd\bar{u}-\dd \bar{v}\wedge \dd u)]\}, \label{BFFR}\nonumber\\
&&  \omega(x,y,m,n) \!=\! \frac{k}{y^2}\dd x\wedge \dd
                      y+\frac{2\nu}{y}(\dd m-r\dd x)\wedge (\dd n-r\dd
                      y)\label{BRF} \\
&&  \quad\quad\quad\quad \! =\! (\frac{k}{y^2}\!+\!2\nu \frac{r^2}{y})\dd x\wedge \dd
     y\!+\!2\frac{\nu}{y}[\dd m\wedge\dd n\!+\!r(\dd y\wedge \dd m\!-\!\dd
     x\wedge \dd n)],\label{BRF2}\nonumber
\end{eqnarray}
where, from \eqref{TAUZ} and \eqref{BFR2}, we have:
\begin{equation}\label{umn}
  u = m+\ii n, \quad m,n \in \R, \quad r= \frac{n}{y}.
\end{equation}
From \eqref{umn} and \eqref{TAUZ} we have:
 \begin{equation}
 \label{rmn}
 r=p,\quad  m=p x+q,\quad n=py .
 \end{equation}
With \eqref{hot} and \eqref{210b} we get:
\begin{equation}\label{DOIIi}
  K(\frac{v-\ii}{v+\ii},\frac{2\ii u}{v+\ii})=\left[\frac{|v+\ii|^2}{2\ii(\bar{v}-v)}\right]^{2k}\!\!\exp\Big\{\frac{2\nu}{|v+\ii|^2}\Big[|u|^2-\frac{(u\bar{v}-\bar{u}v)^2+(\bar{u}-u)^2}{2\ii(\bar{v}-v)}\Big]\Big\}.
\end{equation}

With the change of variables
${\rm FC}_1\colon (v,u)\rightarrow (v,\eta)$
 \begin{equation}\label{FC1MIN}
 {\rm FC}_1\colon \ 2\ii u=(v+\ii)\eta-(v-\ii)\bar{\eta}, \qquad
   {\rm FC}^{-1}_1 \colon  \eta=\frac{u\bar{v}-\bar{u}v +\ii (\bar{u} - u)}{\bar{v}-v},  \end{equation}
 applied to \eqref{BFR} and with \eqref{WVAB} we get:
 \begin{equation}
 \label{Bveta}
   \mc{B}(v,\bar{v},\eta,\bar{\eta})=\frac{1}{2\ii}\big[(v+\ii )\dd
     \eta-(\bar{v}-\ii)\dd \bar{\eta}\big],
     \end{equation}
   and finally we regain \eqref{214a}.

   The matrix corresponding  to the balanced  metric \eqref{NEWMM}
    associated with the   \Ka~ two-form \eqref{BFR} reads:
     \begin{equation}\label{kmb}
       h(v,u)
       =\left(\begin{array}{cc}h_{v\bar{v}}&h_{v\bar{u}}\\\bar{h}_{v\bar{u}}&h_{u\bar{u}}\end{array}\right)
       =\left(\begin{array}{cc}   \frac{k}{2y^2}+\frac{\nu r^2}{y}
                &-\frac{\nu r}{y}\\-\frac{\nu r}{y} & \frac{\nu}{y}
     \end{array}\right), ~ ~y:=\frac{v-\bar{v}}{2\ii},
\end{equation}
while its inverse is:
\begin{equation}\label{kmbINV}
       h^{-1}(v,u)=
      \left(\begin{array}{cc}h^{v\bar{v}}&h^{v\bar{u}}\\\bar{h}^{u\bar{v}}&h^{u\bar{u}}\end{array}\right)
       =\left(\begin{array}{cc}\frac{2y^2}{k}&\frac{2y^2r}{k}\\\frac{2y^2r}{k} &
     \frac{y}{\nu}+\frac{2r^2y^2}{k}\end{array}\right).
\end{equation}

 d) If we apply to \eqref{BFR2} the change of coordinates $\mc{D}^J_1\ni
 (v,u)\rightarrow (x,y,p,q)\in\mc{X}^J_1$ given in \eqref{TAUZ} and \eqref{rmn}, then
\begin{equation}\label{BUVpq}
\mc{B}(v,u)=\dd u -p \dd v
\end{equation}
becomes:
\begin{equation}\label{BUVpq1}
 \mc{B}(x,y,p,q)= v\dd p+\dd q=(x+\ii y)\dd p  +\dd q
\end{equation}
and we regain  \eqref{214b}.

e) The two-parameter   balanced  metric  on the
Siegel--Jacobi upper half-plane $\mc{X}^J_1$  associated to the \Ka~
two-form \eqref{BFR} is\footnote{Parameter $\alpha$ used here has nothing to do with variable $\alpha$ in \eqref{WVAB}}:
\begin{subequations}\label{METRS2}
  \begin{align}
  \!\!\dd s^2_{\mc{X}^J_1}(x,y,p,q)  \!&=\!
\alpha\frac{\dd x^2\!+\!\dd   y^2}{y^2}\!+\!\frac{\gamma}{y}(S\dd p^2\!+\!\dd q^2+2x\dd
                                     p\dd q)\\
    \!&=\!\alpha\frac{\dd x^2\!+\!\dd   y^2}{y^2} \!+\!
        \frac{\gamma}{y}(A^2\!+\!\!B^2),\\
 with ~~ \alpha:=k/2, &~\gamma:=\nu,~~ S:=x^2+y^2,~ A\!=\!x\dd p\!+\!\dd q,~B\!=\!y\dd p.  \label{AK}
 \end{align}
\end{subequations}

  The matrix form of the metric associated  with \eqref{METRS2} is:
\[
g_{{\mc{X}}^J_1}\! = \!\left(\begin{array}{cccc}g_{xx} &0 &0 &0\\
0& g_{yy}& 0& 0 \\
0& 0& g_{pp} & g_{pq} \\0 & 0& g_{qp}& g_{qq}
 \end{array}\right),\!
 \begin{array}{ccc}\quad g_{xx}\!=\frac{\alpha}{y^2}, &
g_{yy}\!=\!\frac{\alpha}{y^2}, &\\
\quad g_{pq}\!=\!\gamma\frac{x}{y} , &
g_{pp} \!=\!\gamma\frac{S}{y},& g_{qq}\!=\!\frac{\gamma}{y}.
\end{array}
\]
\end{Proposition}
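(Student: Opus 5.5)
The plan is to derive everything from the reproducing kernel \eqref{hot} and the general recipe $-\ii\omega=\pa\bar{\pa}\log K$, then transport the results along the explicit coordinate changes.

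\textbf{Part a).} With $f=\log K=-2k\log P+\nu F$, the $w\bar w$ term of $\pa\bar\pa(-2k\log P)$ gives $\frac{2k}{P^2}\dd w\wedge\dd\bar w$. For $\nu F$ I will compute $\pa_z\pa_{\bar z}F=1/P$, $\pa_z\pa_{\bar w}F=\eta/P$ and $\pa_w\pa_{\bar w}F=|\eta|^2/P$, where $\eta=(z+\bar z w)/P$. These entries assemble into $\nu\mc{A}\wedge\bar{\mc{A}}/P$ with $\mc{A}=\dd z+\bar\eta\dd w$, which yields \eqref{kk1} and the metric matrix \eqref{metrica}. The determinant of \eqref{metrica} is $2k\nu/P^3$, so \eqref{hinv} follows by direct inversion. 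For the FC step, I will substitute $z=\eta-w\bar\eta$, so that $\dd z+\bar\eta\dd w=\dd\eta-w\dd\bar\eta$. Then $\mc{A}\wedge\bar{\mc{A}}=(1-|w|^2)\dd\eta\wedge\dd\bar\eta$, because the $\dd\eta\wedge\dd\eta$ terms vanish, and this gives \eqref{E32b}. Writing $w=\alpha+\ii\beta$, $\eta=q+\ii p$ and using $\dd w\wedge\dd\bar w=-2\ii\,\dd\alpha\wedge\dd\beta$ gives \eqref{E32bb}. Substituting $z=\eta-w\bar\eta$ into $F$ gives \eqref{GGG} after simplification; I will check this by expanding $2|z|^2+z^2\bar w+\bar z^2w$, which should factor as $P(2|\eta|^2-\bar w\eta^2-w\bar\eta^2)$. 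Finally, inserting $w=(v-\ii)/(v+\ii)$ gives $P=4y/N$, and taking real parts yields \eqref{FXY}.

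\textbf{Parts b) and c).} For b), I will compose the Cayley map $w=(v-\ii)/(v+\ii)$ with FC. This requires $\dd w=2\ii\,\dd v/(v+\ii)^2$ and \eqref{Pv}. Then $\frac{2k}{P^2}\dd w\wedge\dd\bar w$ becomes $-\frac{2k}{(\bar v-v)^2}\dd v\wedge\dd\bar v$, which gives \eqref{214a}, and \eqref{214b} follows from the real decomposition. For c), I will substitute \eqref{210b} into $\mc{A}$ and simplify; $\bar\eta$ must be expressed through $(v,u)$ using \eqref{FC1MIN}. The goal is the factorization \eqref{ALEFT}, which I expect to be the main computational obstacle. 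I would first verify it by computing the coefficient of $\dd u$, which is immediate, and then the coefficient of $\dd v$, which must reduce to $-r\cdot 2\ii/(v+\ii)$. Once \eqref{ALEFT} holds, $\mc{A}\wedge\bar{\mc{A}}/P$ becomes $\frac{4}{N}\frac{N}{2\ii(\bar v-v)}\mc{B}\wedge\bar{\mc{B}}$, which gives \eqref{BFR}. Expanding with $y=(v-\bar v)/2\ii$ gives the metric matrix \eqref{kmb}, and its inverse \eqref{kmbINV} follows from $\det=k\nu/(2y^3)$. Formula \eqref{DOIIi} comes from direct substitution into \eqref{hot}. Formula \eqref{Bveta} comes from differentiating $2\ii u=(v+\ii)\eta-(v-\ii)\bar\eta$: the $\dd v$ terms produce $(\eta-\bar\eta)\dd v=2\ii p\,\dd v$, and these cancel against $r\dd v$ since $r=p$.

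\textbf{Parts d) and e).} For d), since $u=pv+q$, we have $\dd u-p\dd v=v\dd p+\dd q$, which gives \eqref{BUVpq1}. Then $\frac{2\nu}{y}\Re$-part wedge computations give $\frac{2\nu}{y}\,y\,\dd q\wedge\dd p$, and \eqref{214b} is recovered. For e), the metric is $\frac{k}{2y^2}(\dd x^2+\dd y^2)+\frac{\nu}{y}|\mc{B}|^2$. Using $|\mc{B}|^2=(x\dd p+\dd q)^2+(y\dd p)^2$ then gives \eqref{METRS2} and the matrix entries directly.
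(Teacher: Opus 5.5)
Your proposal is correct and follows essentially the same route as the paper: every item comes from direct computation of $\pa\bar\pa\log K$ and from substituting FC, $\Phi$, $\Phi_1$ and FC$_1$ into \eqref{kk1} and \eqref{hot}. The paper only indicates these checks and otherwise cites earlier work, treating \eqref{hot} and the invariance claims as known inputs, as you do. One point to record: your differentiation of FC$_1$ gives $\mc{B}=\frac{1}{2\ii}\big[(v+\ii)\dd\eta-(v-\ii)\dd\bar\eta\big]$, not the $(\bar v-\ii)\dd\bar\eta$ printed in \eqref{Bveta}; your form is the right one, since with the printed coefficient $\mc{B}$ would be real, $\mc{B}\wedge\bar{\mc{B}}=0$, and \eqref{214a} could not be regained.
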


In the following we reproduce Comment 5.5 from \cite{SB19} with some
completions:
\begin{Comment}\label{CM1}
Berndt  \cite[p. 8]{bern84}  considered the closed two-form
$\omega=\dd \bar{\dd} f'$
of Siegel--Jacobi upper half-plane $\mc{X}^J_1$,  $G^J(\R)_0$-invariant to the action \eqref{AC1},
 obtained from the K\"ahler potential
\begin{equation}\label{POT}
f'(\tau,z)= c_1\log (\tau-\bar{\tau}) -\ii
c_2\frac{(z-\bar{z})^2}{\tau-\bar{\tau}}, ~c_1,c_2>0.\end{equation}
Formula \eqref{POT} is  presented by Berndt as
``communicated to the author  by \Ka''. Our
equation \eqref{METRS2} corrects two
printing errors in Berndt's paper.

In \cite[\S~ 36]{cal3}, reproduced also
in \cite{cal}, \Ka~ argues how to choose the  potential as in
\eqref{POT},
see  also  \cite[(9)  \S ~ 37]{cal3},
where  $c_1=-\frac{k}{2}$, $c_2=\nu\pi$, i. e.
\begin{equation}\label{POT1}
  f'(\tau,z)= -\frac{k}{2}\log\frac{ \tau-\bar{\tau}}{2\ii}
  -\ii\pi\nu \frac{(z-\bar{z})^2}{\tau-\bar{\tau}}.\end{equation}

Once the \Ka~ potential \eqref{POT1} is known, we apply the recipe
\eqref{KALP2}
$$-\ii
\omega_{\mc{X}^J_1}(\tau,z)=f'_{\tau\bar{\tau}}\dd
\tau\wedge\dd \bar{\tau}+f'_{\tau\bar{z}}\dd \tau\wedge\dd \bar{z}
+f'_{z\bar{\tau}}\dd z \wedge \dd \bar{\tau}+f'_{z\bar{z}}\dd z\wedge
\dd \bar{z}.$$

 The metric {\emph{(8)}} in \cite{cal3}  differs from  the metric
 \eqref{METRS2} by a factor of 2,
 since  the Hermitian metric used by  \Ka~  is
 $~\dd s^2=2g_{i\bar{j}}\dd z_i\otimes \dd\bar{z}_j$.

 If in \eqref{POT1} we take $k/2\rightarrow k$, we have:
\begin{subequations}
  \begin{align*}
 f'_{\tau}\!=\!\frac{\partial f'(\tau,z)}{\partial \tau}&\!=\!-k\frac{1}{\tau-\bar{\tau}}+\ii \pi \nu
    \frac{(z-\bar{z})^2}{(\tau-\bar{\tau})^2}, \quad
    f'_{\tau\bar{\tau}}=-k\frac{1}{(\tau-\bar{\tau})^2}+2\ii
            \pi\nu\frac{(z-\bar{z})^2}{(\tau-\bar{\tau})^3}, \\
 f'_{\tau\bar{z}} =&-2\ii
    \pi\nu\frac{z-\bar{z}}{(\tau-\bar{\tau})^2}=f'_{z\bar{\tau}}, \quad
    f'_{z}  =-2\ii\pi\nu\frac{z-\bar{z}}{\tau-\bar{\tau}}, \quad f'_{z\bar{z}}=2\ii\pi\nu\frac{1}{\tau-\bar{\tau}},
 \end{align*}
\end{subequations}
and  we finally get  \eqref{BFR}.
 Relation \eqref{BFR}
 has been  obtained by Berndt  \cite[p. 30]{bern}, where the
 denominator of the first term is misprinted  as $v-\bar{v}$
(or $\tau-\bar{\tau}$ in the above notations). Equation
 \eqref{METRS2} appears also in  \cite[p. 30]{bern} and \cite[p. 62]{bs}.

In \eqref{POT1} we denote $(\tau,z)$  with $(v,u)$ as in \cite[(9.16)]{jac1}.
 We successively make the partial Cayley
 transform \eqref{210}, i.e. $\Phi:~(w, z) \rightarrow
 (v,u)$, a holomorphic transform, and we get \eqref{222a}.
 Then we apply  the $FC_1$ transform  \eqref{FC1MIN}, s.t. $(v,u) \rightarrow  (v,\eta)$,  a non-holomorphic transform, to obtain \eqref{222b},
 and finally we  make the symplectic transform
 $(w,z)\rightarrow (x,y,q,p)$, obtaining the result \eqref{222c}:
 \begin{subequations}\label{Ktau2}
\begin{align}
 & f''(v,u)= \log K(v,u)  =-\frac{k}{2}\log
   \frac{v-\bar{v}}{2\ii}-\ii \nu
                              \frac{(u-\bar{u})^2}{v-\bar{v}},
                 \label{222a}\\
& f''(v,\eta)= \log K(v,\eta)  =
              -\frac{k}{2}\log
   \frac{v-\bar{v}}{2\ii}
              +\frac{\ii \nu}{4} (\eta-\bar{\eta})^2(v-\bar{v})\label{222b},\\
 & f''(x,y,q,p)=\log K(x,y,q,p)  =- \frac{k}{2}\log y
  +2\nu yp^2.\label{222c}
\end{align}
\end{subequations}
Note that \eqref{222a} is different from \eqref{DOIIi}.

The metric  associated to the \Ka~two-form \eqref{214b} is:
\begin{equation}\label{newM}
 \dd s^2 (x,y,q,p)=\frac{k}{2y^2}(\dd x^2 +\dd y^2) +\nu (\dd q^2
  +\dd p^2).
\end{equation}

The metric corresponding to the \Ka~two-form \eqref{222a} is:
\begin{subequations}\label{NEWMM}
  \begin{align}
  \dd  s^2(x,y,n,m) & \!=\!\big(\frac{k}{2}\!+\!\nu \frac{n^2}{y}\big)\frac{\dd x^2\!+\!\dd y^2}{y^2}
                      \!+\!\frac{\nu}{y}\big[\dd n^2 \!+\!\dd m^2 \!-\!2\frac{n}{y}\big(\dd m \dd x\!+\! \dd n \dd  y\big)\big]\\
                    &=\frac{k}{2}\frac{\dd x^2+\dd y^2}{y^2}+\frac{\nu}{y}\big[
                      \big(\frac{n}{y}\dd x -\dd m\big)^2+\big(\frac{n}{y}\dd
                      y-\dd n\big)^2\big].
    \end{align}
     \end{subequations}

The  metric \eqref{NEWMM} corresponds to the \Ka~potential $f''(v,u)$ given in \eqref{222a} instead of $f'(v,u)$ in \eqref{POT1}.

Equation \eqref{222c} was given in \cite[(9.20)]{jac1}.
 In \cite[(4.3)]{gem}, see also \cite[Proposition
 4.1]{gem},  we presented a generalization of \eqref{222c} for $\mc{X}^J_n$, obtained by Takase in \cite[\S 9]{tak}.

 Yang calculated  in \cite{Y07}  the
  metric on $\mc{X}^J_n$, invariant to the action of $G^J_n(\R)_0$.  The equivalence of the metric of Yang with the metric
  obtained via CS  on $\mc{D}^J_n$ and then transported  to
  $\mc{X}^J_n$ via the partial Cayley transform $(v,u)\rightarrow
  (v,\eta)$ given in \eqref{210} is underlined in
  \cite{nou}. In particular, the metric {\emph{(5.21b)}}
  in  reference \cite{SB19}
 appears in \cite[p. 99]{Y07} for the particular values $c_1=1$,
 $c_2=4$. See also \cite{Yan,Y08,Y10}.
\end{Comment}

\begin{Remark}\label{REM1}
 In formula \eqref{222a} we make  the change of coordinates $FC_1$
  \eqref{FC1MIN} and we get \eqref{222b}, to which we apply
  the recipe in \eqref{KALP2} or \cite[(7.18)]{jac1} to calculate:
\[
  -\ii \omega(v,\eta)=h_{v\bar{v}}\dd v\wedge
  \dd \bar{v}+h_{v\bar{\eta}}\dd v\wedge \dd
  \bar{\eta}+h_{\eta\bar{v}} \dd\eta \wedge \dd \bar{v}
  +h_{\eta\bar{\eta}}\dd \eta\wedge \dd\bar{\eta}.
\]
The associated matrix
\begin{equation}\label{hs}
  h=\left(\begin{array}{cc}h_{v\bar{v}} & h_{v\bar{\eta}}\\
            h_{\eta
            \bar{v}}& h_{\eta\bar{\eta}}\end{array}\right)=
        \left(\begin{array}{cc}\frac{k}{8y^2}&\nu p\\
                \nu p & \nu  y \end{array}\right)
       \end{equation}
is hermitian and we have
\begin{equation}\label{omM}
  \omega_{\mc{X}^J_1}(x,y,q,p)=\frac{k}{4}\frac{\dd x\wedge\dd
    y}{y^2}+2\nu[p(\dd x\wedge \dd p+\dd q\wedge \dd y)+y\dd q\wedge  \dd p],
\end{equation}
which is different from \eqref{214b} obtained introducing
\eqref{ULTRAN1} into \eqref{kk1}.
\end{Remark}

\begin{proof}
 From \eqref{222b} we find:
  \begin{subequations}
    \begin{align}
    &  h_v=\frac{\partial f(v,\eta)}{\partial v}
      =-\frac{k}{2}\frac{1}{v-\bar{v}}+\frac{\ii\nu}{4}(\eta-\bar{\eta})^2,\\
     &  h_{v\bar{v}}\!=\!\frac{\partial^2 f(v,\eta)}{\partial\bar{v}\partial v}  =
                      -\frac{k}{2}(v-\bar{v})^{-2}=\frac{k}{8}\frac{1}{y^2},
                      \\
    &  h_{v\bar{\eta}}\!=\!\frac{\partial^2 f(v,\eta)}{\partial\bar{\eta}\partial v}  =-\ii \frac{\nu}{2}(\eta-\bar{\eta})=\nu p,\\
    &  h_{\eta}=\frac{\partial f(v,\eta)}{\partial \eta}  =\frac{\ii \nu}{2} (\eta-\bar{\eta})(v-\bar{v}),\\
    &   h_{\eta\bar{v}}=\bar{ h}_{v\bar{\eta}}  =-\ii \frac{\nu}{2}(\eta-\bar{\eta})=\nu p,\\
    &  h_{\eta\bar{\eta}} \!=\!\frac{\partial^2 f(v,\eta)}{\partial\bar{\eta}\partial \eta} =\nu y
             \end{align}
    \end{subequations}
  and we thus get \eqref{hs} which is Hermitian. The conditions \eqref{EQK}  that the metric associated to \eqref{omM} be \Ka~ are met.
   \end{proof}

\begin{Remark}
 With a formula similar to \cite[(7.18)]{jac1} applied to
  \eqref{GGG},  we obtain:
    \begin{equation}\label{F1}
    -\ii \omega(w,\eta) =\frac{2k}{(1-w\bar{w})^2}\dd w\wedge\dd
                         \bar{w}+\nu [\dd\eta\wedge \dd \bar{\eta}-\bar{\eta}\dd w\wedge
                         \dd \bar{\eta} +\eta \dd \bar{w}\wedge\dd \eta],
    \end{equation}
and if we use \eqref{WVAB} we find:
\begin{eqnarray}\label{F2}
  \omega(\alpha,\beta,q,p) &=&4k\frac{\dd \alpha
                                            \wedge\dd \beta}
   {(1-\alpha^2-\beta^2)^2}+2\nu \dd q\wedge \dd p \\
   & +&2\nu [\dd q \wedge(p\dd \alpha-q\dd \beta)+\dd p\wedge(p\dd\beta+q\dd \alpha) ]\nonumber.
   \end{eqnarray}
We note that equation \eqref{F1} is different from \eqref{E32b} and \eqref{F2} is different from \eqref{E32bb}.

If we make the coordinate change $w\rightarrow
    v$ of \eqref{ULTRAN1}, the \Ka~ two-forms \eqref{F1} and \eqref{F2} become:
    \begin{equation}
      -\ii \omega(v,\eta)\!=\!\frac{k}{2y^2}\dd v\wedge \dd\bar{v}
      +\nu \Big\{\dd \eta\wedge \dd\bar{\eta}-\! 2\ii \Big[ \frac{\bar{\eta}}{(v+\ii)^2}\dd v\wedge \dd\bar{\eta}+
      \frac{\eta}{(\bar{v}-\ii)^2} \dd \bar{v}\wedge \dd \eta\Big]\! \Big\},
    \end{equation}
    \begin{eqnarray}
    \label{altaOM}
     \quad \quad \omega_{\mc{X}^J_1}(x,y,q,p)& \!\!=\!\!&\frac{k}{y^2}\dd x\wedge \dd y  +2\nu \dd q\wedge \dd p \\
      &&+\frac{4\nu}{N^2}\Big\{\big[q(x^2\!-\!(y+1)^2)\!-\!2px(y+1)\big](\dd
      x\wedge \dd q+\dd y\wedge \dd p)\nonumber\\
     && +\big[2qx(y+1)+p(x^2-(y+1)^2)\big](-\dd x \wedge \dd p +\dd y\wedge \dd q)\Big\}.\nonumber
    \end{eqnarray}
    Note that \eqref{altaOM} is different from \eqref{214b} and \eqref{omM}.
    \end{Remark}

Furthermore, we have obtained the invariant metric to the action of the Jacobi
group $G^J_1(\R)$ on the extended Siegel-Jacobi upper half-plane $\tilde{\mc{X}}^J_1$ (see \cite[Proposition 5.6,  (5.25), (5.26)]{SB19} and also
\cite[Proposition 4,  (69) ]{SB20}).

\section{Connection matrices and Berry phases on $\mc{D}^J_1$, $\mc{X}^J_1$}\label{BEP}

        \subsection{Connection matrices on $\mc{D}_1$ and $\mc{X}_1$}\label{33}

      \

\noindent a) We start with the \Ka~potential on the Siegel disk  $\mc{D}_1$:  \[ f(w)=-2k\log P
        \]
   which corresponds to the \Ka~two-form on $\mc{D}_1$ \cite[(7.21)]{jac1}
 \begin{equation}\label{jac111}
          -\ii\omega(w,\bar{w})=\frac{2k}{P^2}\dd w\wedge \dd \bar{w}.
 \end{equation}
      If we make the change of variables \eqref{210b} $w\rightarrow v$
      and apply
\begin{equation}\label{dwv}
   \dd w=2\ii \frac{\dd v }{(v+i)^2},
 \end{equation}
      we get:
      \begin{subequations}\label{OMMM}
        \begin{align}
          -\ii \omega(v,\bar{v}) & = -\frac{2k}{(v-\bar{v})^2}\dd v\wedge \dd \bar{v}= \frac{k}{2y^2}\dd v\wedge \dd \bar{v},\label{314a}\\
          \omega(x,y) &= \frac{k}{y^2}\dd x\wedge \dd y, ~~v=x+\ii y.
        \end{align}
        \end{subequations}

\

      b)   From \eqref{kk1} we get the ''matrix'' $h$ on the Siegel-Jacobi disk  $\mc{D}_1$ :
        \begin{equation}\label{hD1J}
          h_{\mc{D}^J_1}(w)=\frac{2k}{P^2}.
        \end{equation}
      Particularizing formula \eqref{CRISTU} for the Christoffel coefficients to this case where $n=1$ and $z_\alpha$ reduce to $w$, we get:
          \begin{equation}\label{WWW}
          \Gamma^w_{ww}=\frac{2\bar{w}}{P}.
        \end{equation}
  Considering the change of coordinate $w \rightarrow v$, as in \eqref{210b}, and using \eqref{CCC}, we have:
        \begin{equation}\label{dir}
         \Gamma^v_{vv}=\Gamma^w_{ww}\frac{\pa w}{\pa v}+ \frac{\pa^2
           w}{\pa v^2}\frac{\pa v}{\pa w}.
          \end{equation}
          With \eqref{dwv} we get $\frac{\pa w}{\pa v} $ and then
          \[
            \frac{\pa^2 w}{\pa v^2} =-\frac{4\ii}{(v+\ii)^3} .
          \]
          Using \eqref{dir} we get
          \begin{equation}\label{VVV}
            \Gamma^v_{vv}=\frac{2}{\bar {v}-v}=\frac{\ii}{y},
          \end{equation}
          which is correct because if we apply \eqref{CRISTU} to
          \begin{equation}
            h_{\mc{X}^J_1}(v)=-\frac{2k}{(v-\bar{v})^2}=\frac{k}{2y^2},
            \end{equation}
   we get \eqref{VVV}.
We then have:
\begin{equation}\label{VBARV}
v-\bar{v}=2\ii \frac{P}{(1-w)(1-\bar{w})}=2\ii y.
\end{equation}

            Inverse, if we consider the change of variables
            $v\rightarrow w$ and apply \eqref{CCC}:
            \[
              \Gamma^w_{ww}=\Gamma^v_{vv} \frac{\pa v}{\pa w}
              +\frac{\pa^2v}{\pa w^2} \frac{\pa w }{\pa v},
            \]
            with \eqref{VVV} we get \eqref{WWW}.

c) With definition \eqref{CM} and formula \eqref{VVV} we get
the connection matrix on $\mc{X}_1$:
\begin{equation}\theta^v_v(v)=\Gamma^v_{vv}\dd
  v=\frac{-2}{v-\bar{v}}\dd v
 \end{equation}
and, respectively, on $\mc{D}_1$ for $v \rightarrow w$ given in \eqref{ULTRAN}, using \eqref{CCM}, \eqref{VBARV} and \eqref{WWW}:
\[\theta^w_{w}(w) =\Gamma^w_{ww}\dd w= 2\frac{\bar{w}}{1-w\bar{w}}\dd w.
\]

\subsection{Connection matrices  on $\mc{D}^J_1$ and $\mc{X}^J_1$ }\label{CON}

\

We start with the Christoffel symbols in coordinates $(w,z)$, which have  the expressions \cite[(38)]{GAB}:
\begin{equation}\label{GAMM}
\begin{split}
\Gamma^z_{zz}  & =-\lambda\bar{\eta},~\Gamma^w_{zz}=\lambda, ~
\Gamma^z_{zw}=-\lambda\bar{\eta}^2+\frac{\bar{w}}{P},\\
 \Gamma^w_{wz} & =\lambda\bar{\eta},~
 \Gamma^z_{ww}=-\lambda\bar{\eta}^3, ~ \Gamma^w_{ww} =
 \lambda\bar{\eta}^2+2\frac{\bar{w}}{P},
\end{split}
\end{equation}
 where $$\lambda = \frac{\nu}{2k}.$$
The connection matrix on $\mc{D}^J_1$
is defined, as in \eqref{CM} and \cite[(40)]{GAB}, by:
\begin{subequations}
  \begin{align}\theta_{\mc{D}^J_1}(w,z):& =\left(\begin{array}{cc} \theta^w_w & \theta^z_w\\ \theta^w_z &   \theta^z_z\end{array}\right)
  =\left(\begin{array}{cc} \Gamma^w_{wz}\dd z+\Gamma^w_{ww}\dd w &
           \Gamma^z_{wz}\dd z+\Gamma^z_{ww}\dd w\\
       \Gamma^w_{zz}\dd z+\Gamma^w_{zw}\dd w& \Gamma^z_{zz}\dd z   +\Gamma^z_{zw}\dd w
   \end{array}\right)\label{33ab63}\\
~~& =\left(\begin{array}{cc} \lambda\bar{\eta}\mc{A}+2\frac{\bar{w}}{P}\dd  w
                          & -\lambda\bar{\eta}^2\mc{A} +\frac{\bar{w}}{P}\dd z\\
                          \lambda \mc{A} & -\lambda\bar{\eta}\mc{A}+\frac{\bar{w}}{P}\dd w
   \end{array}\right),
  \end{align}\end{subequations}
where $\mc{A}$ was given in \eqref{kk1}.

In variables $(u,v)\!\in\!(\C,\mc{X}_1)$, geodesic equations \eqref{geo} for metric \eqref{kmb} read  \cite[(57)]{SB21}:
\begin{equation}\label{geomic}
 \left\{
 \begin{array}{l}
 \frac{\dd^2 u}{\dd t^2}+\Gamma^u_{uu}\left(\frac{\dd u}{\dd
     t}\right)^2 +
2\Gamma^u_{uv}\frac{\dd u}{\dd t}  \frac{\dd v}{\dd t} +\Gamma
^u_{vv}\left(\frac{\dd v}{\dd t}\right)^2 =0   ,\\
  \frac{\dd^2 v}{\dd t^2}+\Gamma^v_{uu}\left(\frac{\dd u}{\dd
     t}\right)^2 +
2\Gamma^v_{uv}\frac{\dd u}{\dd t}  \frac{\dd v}{\dd t} +\Gamma
^v_{vv}\left(\frac{\dd v}{\dd t}\right)^2=0 .
     \end{array}
 \right.
\end{equation}

The  Christoffel symbols in $(u,v)$ corresponding to the
  Riemannian metric associated to the \Ka~two-form \eqref{BFR},
  extracted from \cite[(62)]{SB21} (with small corrections added), have the expressions:
\begin{equation}\label{XGAMMM}
\begin{split}
\Gamma^u_{uu}  & =\frac{\ii}{\iota}r,~\Gamma^v_{uu}=\frac{\ii}{\iota}, ~
\Gamma^u_{uv}= \frac{\ii}{2\iota}(\frac{\iota}{y}-2r^2);\\
 \Gamma^v_{vu} & =-\frac{\ii}{\iota}r,~
 \Gamma^u_{vv}=\frac{\ii}{\iota}r^3, ~ \Gamma^v_{vv} =
\frac{\ii}{\iota}(\frac{\iota}{y}+r^2),
\end{split}
\end{equation}
where:
\begin{equation}\label{CUCUV}
  \iota:=\frac{k}{\nu},\quad r=\frac{u-\bar{u}}{v-\bar{v}}.\quad y=\frac{v-\bar{v}}{2\ii}.
\end{equation}
Using \eqref{XGAMMM} in \eqref{geomic} we find the same equations as in \cite[(50)]{SB21}:
\begin{subequations}\label{ECIV_2}
\begin{align}
& \ddot{v}+\frac{\ii}{\iota}\left[\dot{u}^2-2r\dot{u}\dot{v}+
(\frac{\iota}{y}+r^2)\dot{v}^2\right]=0,\\
& \ddot{u}+\frac{\ii}{\iota}\left[
 - r\dot{u}^2+(\frac{\iota}{y}-2r^2)\dot{u}\dot{v}+r^3\dot{v}^2
 \right]=0.
\end{align}
\end{subequations}
\begin{proof}
We check below only the value for $\Gamma^u_{uv}$, since the other ones prove similarly.

We have the following metric compatibility equations:
    \[
  \left\{
    \begin{array}{cc} h_{v\bar{v}}\Gamma^v_{vu}+h_{u\bar{v}}\Gamma^u_{vu} & =\frac{\pa h_{u\bar{v}}}{\pa v}\\
          h_{v\bar{u}}\Gamma^v_{vu}+h_{u\bar{u}}\Gamma^u_{vu}&
 =\frac{\pa h_{u\bar{u}}}{\pa v}
\end{array} \right. \!\!,\]
which, according to \eqref{kmb}, write as:
\[
  \left\{
    \begin{array}{cc} (\frac{k}{2y^2}+\frac{\nu
      r^2}{y})\Gamma^v_{vu}-\frac{\nu r}{y}\Gamma^u_{vu} & =-\frac{\ii    \nu r}{y^2}\\
             -\frac{\nu r }{y}\Gamma^u_{vu}+\frac{\nu}{y}\Gamma^u_{vu}&
 =\frac{\ii \nu}{2y^2}
    \end{array} \right. \!\!,\]
Using the Jacobian method and the following notation
\[
    \Delta=-\det h(v,u)= -\left\|\begin{array}{cc}h_{v\bar{v}}& h_{v\bar{u}}\\
               h_{u\bar{v}}& h_{u\bar{u}}\end{array} \right\|
 =-\frac{\nu k}{2y^3}~,
 \]
     \[
     \Delta_1= \left\| \begin{array}{cc}\frac{\ii\nu}{2y^2}& \frac{\nu }{y}\\
     -\frac{\ii\nu r}{y^2} & -\frac{\nu r}{y}\end{array}\right\|
     =\frac{\ii \nu ^2r}{2y^3} ~,
     \]
   \[
   \Delta_2=-\left\|\begin{array}{cc}\frac{\ii \nu}{2y^2}
                       &-\frac{\nu r}{y}\\-\frac{\ii \nu
                       r}{y^2}&\frac{k}{2y^2}+\frac{\nu r^2}{y}
                     \end{array}\right\|
                   =-\ii \frac{\nu k}{4y^4} +\frac{\ii\nu^2r^2}{2y^3}~,
       \]
                 we get
              \[
      \Gamma^v_{vu}=\frac{\Delta_1}{\Delta}=-\frac{\nu r}{k}, \quad
      \Gamma^u_{vu}=\frac{\Delta_2}{\Delta}=\frac{\ii}{2y}-\frac{\ii \nu r^2}{k}
      .    \]

    With \eqref{CRISTU} and \eqref{kmbINV}, we have:
    \[
      \Gamma^u_{vu}=h^{v\bar{u}}\frac{\pa h_{u\bar{v}}}{\pa
        v}+h^{u\bar{u}}\frac{\pa h_{u\bar{u}}}{\pa v}=
      \frac{2y^2r}{k}\frac{\pa }{\pa v}(-\frac{\nu
        r}{y})+\big(\frac{y}{\nu}+\frac{2 r^2y^2}{k}\big)\frac{\pa}{\pa v}(\frac{\nu}{y})=\frac{\ii}{2\iota}(\frac{\iota}{y}-2r^2),
      \]
which proves the value of $\Gamma^u_{vu}$ given in \eqref{XGAMMM}.
\end{proof}
The connection matrix \eqref{CM} on $\mc{X}^J_1$ in $(v,u)$ has the expression:
  \begin{subequations}\label{338}
    \begin{align}
    \theta_{\mc{X}^J_1}(v,u) &=\left(\begin{array}{cc}\theta^v_v& \theta^u_v\\
              \theta^v_u& \theta^u_u\end{array}\right)
              =\left(  \begin{array}{cc} \Gamma^v_{vu}\dd u+\Gamma^v_{vv}\dd v &   \Gamma^u_{vu}\dd u+\Gamma^u_{vv}\dd v \\
              \Gamma^v_{uu}\dd  u+  \Gamma^v_{uv}\dd  v
              &  \Gamma^u_{uu}\dd u+\Gamma^u_{uv}\dd v \end{array}\right) \label{338a} \\
     ~~ &\! =\!\frac{\ii}{\iota}   \left(\begin{array}{cc}
           -r\mc{B}+\frac{\iota}{y}\dd v   & -r^2\mc{B}+\frac{\iota}{2y} \dd u\\
   \mc{B}  & r\mc{B}+\frac{\iota}{2y}\dd v    \end{array}\right)\! \\
   &=\!\frac{\ii}{\iota}\!
                     \left[\left(\begin{array}{cc}
                           -r &-r^2 \\
                           1 & r
                           \end{array}\right)\mc{B}\!+\!
  \frac{\iota}{2y}\left(\begin{array}{cc} 2 \dd v& \dd u \\
                                          0 & \dd v\end{array}\right)\!\right]. \label{338b}
       \end{align}
  \end{subequations}
 where $\mc{B}$ was defined in \eqref{BFR2}.

 Now we calculate the connection matrix on the Siegel-Jacobi disk in
  the variables $(x,y,q,p)$. The non-zero  Christoffel symbols corresponding to the Riemannian metric
$\dd\!s^2_{\mc{X}^J_1}(x,y,q,p)$ on  the Siegel-Jacobi upper half-plane,  given in \eqref{METRS2}, are \cite[(73)]{SB21}:
\begin{equation}
  \begin{array}{lllll}\label{GSC}
    \Gamma^{x}_{xy}=-\frac{1}{y},
  &\Gamma^{x}_{pp}=-\ep xy,
  &\Gamma^{x}_{pq}= -\frac{1}{2}\ep y, & & \\
   \Gamma^y_{xx}= \frac{1}{y},&
 \Gamma^y_{yy}=-\frac{1}{y},&\Gamma^y_{pp}
 =\frac{\ep}{2}(x^2\!\!-\!\!y^2),&\Gamma^y_{pq}=\frac{\ep}{2}x,
  &\Gamma^y_{qq}=\frac{\ep}{2}, \\
  \Gamma^{p}_{xp}=\frac{1}{2}\frac{x}{y^2}, &  \Gamma^{p}_{xq}=
 \frac{1}{2}\frac{1}{y^2},&\Gamma^{p}_{yp}= \frac{1}{2y}, & & \\
  \Gamma^q_{xp}=\frac{y^2-x^2}{2y^2}, &\Gamma^q_{xq}=
  -\frac{x}{2y^2},&\Gamma^q_{yp}=-\frac{x}{y},&\Gamma^q_{yq}
  = -\frac{1}{2y} ,&  \end{array}
\end{equation}
where
$\epsilon=\frac{\gamma}{\alpha}=2\frac{\nu}{k}.$

The connection matrix in $(x,y,q,p)$ reads generally:
\begin{equation}\label{thetam}
    \!\theta_{\mc{X}^J_1} (x,y,q,p)\!=
    \!\left(\begin{array}{cccc}\theta^ x_x &\theta^ x_y &\theta^ x_q &\theta^ x_p \\
   \theta^y_x & \theta^y_y &\theta^y_q& \theta^y_p\\
  \theta^q_ x&         \theta^q_y&  \theta^q_q  &\theta^q_p\\
  \theta^p_x &  \theta^p_ y&  \theta^p_ q&  \theta^p_p
          \end{array}\right)\! .\end{equation}
With \eqref{CM} and \eqref{GSC} we find the explicit matrix elements of \eqref{thetam}:
\begin{subequations}
  \begin{align}\label{CMexp}
 &\theta_{\mc{X}^J_1} \!\! =\!\!\left(\!\!\begin{array}{cccc}\!-\frac{\dd y}{y} &
 \!-\frac{\dd x}{y}& \!-\frac{\ep}{2}y\dd p& \!-\ep xy \dd p\!-\!\frac{\ep y}{2}\dd q \\
    \frac{\dd x}{y}&\!-\frac{\dd y}{y} &
    \frac{\ep}{2}\dd q \!+\!\frac{\ep}{2}x\dd p &
    \!\frac{\ep}{2}x\dd q \!+\!\frac{\ep}{2}(x^2\!-\!y^2)\dd p\\
  \!-\frac{x}{2y^2}\dd q\!+\frac{y^2\!-\!x^2}{2y^2}\dd p
           &\!-\frac{x}{y}\dd p  &\! -\frac{x}{2y^2}\dd x &
          \frac{y^2\!-x^2}{2y^2}\dd x\!-\frac{x}{y}\dd y\\
   \frac{x}{2y^2}\dd p\!+\frac{1}{2y^2} \dd q & \frac{1}{2y}\dd p&
         \frac{1}{2y^2} \dd x &  \frac{x}{2y^2}\dd x\!+\frac{1}{2y}\dd y                               \end{array}\!\!\right) \\
   =&\! \left(\!\!\!\!\begin{array}{cccc} (0,-\frac{1}{y},0,0)
    & (-\frac{1}{y}, \!0\!,\!0,\!0)
    &(0,\!0,\!0,\!-\frac{\ep y}{2})
    &(0\!,\!0\!,-\frac{\ep}{2}y,-\ep xy)\\
     (\frac{1}{y}, \!0\!,\!0\!,\!0)
 & (0\!,\!-\frac{1}{y}\!,\!0\!,\!0)
 &(0\!,\!0\!,\!\frac{\ep}{2}\!,\!\frac{\ep x}{2})\!
 &(0\!,0\!,\frac{\ep x}{2}\!,\!
   \frac{\ep}{2} (x^2\!-\! y^2))\\
     (0,\!0,-\frac{x}{2y^2},\!\frac{y^2\!-\!x^2}{2y^2})
   &(0\!,-\frac{x}{y},\!0\!,\!0)
   & (\!-\frac{x}{2y^2},\!0\!,\!0\!,\!0)&
            (\frac{y^2\!-\!x^2}{2y^2},-\frac{x}{y},\!0\!,\!0)\!\\
      (0,\!0,\frac{1}{2y^2},\frac{x}{2y^2})&    (0\!,\!0\!,\!0\!,\!\frac{1}{2y})
      &(\frac{1}{2y^2},\!0\!,\!0\!,\!0)\!&   (\frac{x}{2y^2},\frac{1}{2y},\!0\!,\!0\!)
      \end{array}\!\!\!\!\right)
 \!\otimes\! \left(\!\!\!\begin{array}{c}\dd x\\ \dd y \\ \dd q\\ \dd  p\end{array}\!\!\!\right)\!. \end{align}\end{subequations}

\

\subsection{Berry phases on $\mc{D}_1$, $\mc{X}_1$, $\mc{D}^J_1$ and $\mc{X}^J_1$ }

\

a) First we calculate the Berry connection on $\mc{X}_1$ from the Berry connection on
$\mc{D}_1$.

Relations \eqref{210b},  \eqref{WVAB}, \eqref{TAUZ} and \eqref{NNN}
imply the following:
\begin{equation}\label{ALFAB}
  \alpha= \frac{x^2+y^2-1}{N},\quad \beta
  =-2\frac{x}{N}.
 \end{equation}
Having
\[
  f=-2k \log P=-2k\log (1-|w|^2),
\]
with \eqref{BCON} we find the Berry connection on the Siegel disk: $\mc{D}_1$
\begin{equation}\label{ThEETA}
A_B (w,\bar{w})=\ii k\frac{\bar{w}\dd w-w\dd \bar{w}}{P}=
2k \frac{\beta\dd \alpha-\alpha \dd \beta}{P}.
\end{equation}
Now, using \eqref{ALFAB}, we find the following
\begin{subequations}\label{DALPHA}
  \begin{align}
  \dd \alpha & = 2\frac{2x(y+1)\dd x+[-x^2+(y+1)^2]\dd y}{N^2},\\ \dd
    \beta & =-2\frac{[-x^2+(y+1)^2]\dd x-2x(y+1)\dd y}{N^2}.
            \end{align}
\end{subequations}
Computing
\begin{subequations}\label{312}
  \begin{align}
  -\alpha \dd \beta +\beta \dd \alpha
    = & \frac{2}{N^3}
       \{ [(x^2+y^2-1)(-x^2+(y+1)^2)-4x^2(y+1)]\dd x \nonumber\\
      & + [-2x(x^2+y^2-1)(y+1)-2x(-x^2+(y+1)^2]\dd y\}\nonumber\\
      =&  \frac{k}{2}\frac{2}{N^2}\frac{-N(x^2-y^2+1)\dd
    x-2Nxy\dd y}{y},\nonumber
  \end{align}
\end{subequations}
we find that Berry connection \eqref{ThEETA} on $\mc{D}_1$  in $(\alpha,\beta) $ implies the following formula for the Berry connection on $\mc{X}_1$ in $(x,y)$:
\begin{equation}\label{BNFDX}
   A_B(x,y)=k\frac{(-x^2+y^2-1)\dd x -2xy\dd y}{y[x^2+(y+1)^2]}.
\end{equation}
The Berry phase is related to \eqref{BNFDX} via \eqref{HOL}.

Christofell symbols in variables $(x,y)$ on $\mc{X}_1$ are
extracted from \cite[(69)]{SB21}
\begin{subequations}\label{GM22}
  \begin{align}
    \Gamma^x_{xx} & = 0,\quad  \Gamma^x_{xy}  = -\frac{1}{y},\quad
                    \Gamma^x_{yy}  = 0,\\
     \Gamma^y_{xx} & = \frac{1}{y},\quad  \Gamma^y_{xy}  = 0,\quad  \Gamma^y_{yy}  = -\frac{1}{y}.
 \end{align}
 \end{subequations}

b) {\it The Berry phase on the Siegel-Jacobi disk} $\mc{D}^J_1$ {\it {in}} $(w,z)$, $(\alpha,
\beta, q, p)$.

The starting point is the scalar product of two CS on
$\mc{D}^J_1$ \cite[(7.13b)]{jac1}
\begin{equation}\label{SCWZ}
  f(z,w)=(e_{z,w},e_{z,w})=-2k\log P+\nu F, \quad F=\frac{2z\bar{z}+\bar{w}z^2+w\bar{z}^2}{2P}.
  \end{equation}

  With \eqref{BCON} we get
  \begin{equation}\label{AA121}
    A_B(z,w)=\frac{\ii }{2}(A(z,w)-cc),
  \end{equation}
  where \cite[\S 4.2]{FC}
  \begin{equation}
  A(z,w) =(2k\frac{\bar{w}}{P}+\frac{\nu}{2}\bar{\eta}^2)\dd w
           +\nu\bar{\eta}\dd z = k(\Gamma^w_{ww}\dd w+2\Gamma^w_{wz}\dd z).\label{331b}
      \end{equation}

                   With \eqref{33ab63} we rewrite \eqref{331b} as
              \[
                 A(z,w)=k(\theta^w_w+\Gamma^w_{wz}\dd z),
                  \]
       which is in fact the formula before (4.27) in \cite{FC}.

  With \eqref{E32} we get for $A$ in \eqref{AA121}
  \[
    A(w,\eta)=(2k\frac{\bar{w}}{P}-\frac{\nu}{2}\bar{\eta}^2)\dd
    w+\nu\bar{\eta}(\dd \eta- w\dd \bar{\eta}).
    \]
We also have the following expression for the Berry connection:
\begin{equation}
  \begin{split}
      A_B(\alpha,\beta,q,p) & =2k\frac{\beta}{1-\alpha^2-\beta^2}\dd \alpha +
      [-2k\frac{\alpha}{1-\alpha^2-\beta^2}
      +\frac{\nu}{2}(q^2-p^2)]\dd\beta\\ &-\nu[(\alpha-\beta q-1)\dd q+(\alpha q+\beta)\dd p].
    \end{split}
    \end{equation}

    c) {\it{Berry phase in} } $(u,v)$

    We use \eqref{222a} for $f(u,v)$, $(u,v)=(m+\ii n,x+\ii y)$. We
    have
    \[
 f_v=-\frac{k}{2}\frac{1}{v-\bar{v}}+\ii \nu       r^2=\ii (
              \frac{k}{4y}+\nu\frac{n^2}{y^2}),\quad
              f_u =-2\ii \nu r=-2\ii \nu  \frac{n}{y}.
              \]

              We get
              \begin{subequations}
                \begin{align}
                A(u,v)& =f_u\dd u + f_v \dd v= -2\ii \nu r \dd u+\ii
                        (\frac{k}{4y}+\nu r^2)\dd v\\
                  & = k[- 2\Gamma^u_{uu}\dd
                    u+(\Gamma^v_{vv}-\frac{3i}{4}\frac{1}{y})\dd v],
                  \end{align}\end{subequations}
where we used \eqref{XGAMMM}.
      \begin{equation}\label{ABXY}
        A_B(x,y,m,n)= \frac{\ii}{2}
        (A(u,v)-\bar{A}(u,v))= \frac{1}{y}\left[-(\frac{k}{4}+\nu\frac{n^2}{y})\dd
          x+2\nu n\dd m
        \right].        \end{equation}

   d) {\it Berry phase on} $\mc{X}^J_1$ {\it in} $(v,\eta)=(x+\ii y,q+\ii p)$

      With \eqref{kk1} for $f(v,\eta)$, we have
      \[
          f_v = -2k\frac{1}{v-\bar{v}}= \ii \frac{k}{y};\quad
          f_{\eta}= -\nu (\eta-\bar{\eta})=-2\ii \nu p,
        \]
      and we find:
      \[
        A_B(x,y,q,p)=-\frac{k}{y}\dd x+2\nu p\dd q.        \]

    We write
  \begin{equation}\label{ABXYY}
    A_B(\alpha,\beta,x,y)=\dd \phi_B(\alpha,\beta,x,y)=\dd \phi_{B I}+\dd \phi_{B II}+\dd \phi_{B III},
  \end{equation}
  where $\dd \phi_{B I}$, which  appears in \eqref{ThEETA}, was
  calculated as \eqref{BNFDX}, and
  \begin{subequations}\label{AA1}
    \begin{align}\dd \phi_{B II}& = \frac{\ii \nu}{2}(-\bar{\eta}^2\dd w +cc),\\
\dd \phi_{B III}&= \frac{\ii \nu}{2}[(\bar{\eta}+w\eta)\dd \eta -cc].
    \end{align}
  \end{subequations}
  We get:
  \begin{subequations}\label{AA2}
    \begin{align}
       \dd \phi_{B II}& =\frac{\ii \nu}{2}[-(q-\ii p)^2(\dd \alpha
                        +\ii \dd \beta)+cc]=\nu [(q^2-p^2)\dd \beta
                        -2qp\dd \alpha],\\
      \dd \phi_{B III}&= \nu\{-[(\alpha+1)q+\beta p]\dd p+[(1-\alpha
                        )p+\beta q ]\dd q
                       \}.
      \end{align}
    \end{subequations}
    With \eqref{ALFAB}, \eqref{DALPHA} we find the Berry connection:
\begin{equation}\label{EQ11}
      \begin{split}
      A_B(x,y,p,q) 
   & = \frac{1}{N}\Big[ \frac{k}{y}(y^2\!-\!x^2\!-\!1)\!+\!\frac{2\nu}{N}\big[(x^2\!-\!(y\!+\!1)^2)(q^2\!-\!p^2)\!-\!4x(y\!+\!1)pq\big]\Big]\dd x\\
        & +\frac{2}{N}\{ -kx+\frac{2\nu}{N}[x^2-(y+1)^2]pq+x(y+1)(q^2-p^2)\}\dd y \\
          & +\frac{2\nu}{N}\{ [-[x^2+y(y+1)]q+xp]\dd p +[(y+1)p-xq]\dd q\}.
        \end{split}
      \end{equation}
The Berry phase is given by applying the closed contour integral \eqref{HOL} to \eqref{EQ11}.
\section{Appendix}

 \subsection{Balanced metric on homogeneous \Ka~ manifolds}

\

The starting point in  Perelomov's approach to coherent states (CS) is  the triplet
$(G,\pi,\got{H})$, where $\pi$ is a unitary, irreducible representation
of the Lie
group $G$ on a separable complex  Hilbert space $\got{H}$  \cite{perG}.

Two types of CS-vectors belonging to  $\got{H}$ are locally defined on
$M=G/H$:  the normalized (un-normalized) CS-vector
 $\underline{e}_x$ (respectively, $e_z$) \cite[\S 6, Remark 4, (6.25)]{SB95}
 \begin{equation}\label{2.1}
\underline{e}_x=\exp(\sum_{\phi\in\Delta^+}x_{\phi}{\mb{X}}^+_{\phi}-{\bar{x}}_{\phi}{\mb{X}}^-_{\phi})e_0,
\quad e_z=\exp(\sum_{\phi\in\Delta^+}z_{\phi}{\mb{X}}^+_{\phi})e_0,
\end{equation}
where $e_0$ is the extremal weight vector of the representation $\pi$,
$\Delta^+$ is the set of positive roots
of the Lie algebra $\got{g}$, and   $X_{\phi}$, $\phi\in\Delta$, and
$X^+_{\phi}$   ($X^-_{\phi}$)
 are the positive (respectively, negative) generators.

In the standard procedure of CS,
the  $G$-invariant \Ka~ two-form  on a $2n$-dimensional homogeneous
manifold $M=G/H$ is obtained from the \Ka~ potential $f$ via the recipe
\begin{subequations}\label{KALP}
  \begin{align}-\ii\omega_M & =\pa\bar{\pa}f, ~f(z,\bar{z})=\log
  K(z,\bar{z}), ~K(z,\bar{z}):=(e_{{z}},e_{{z}}),\label{KALP1}\\
\omega_M(z,\bar{z}) & =\ii \sum_{\alpha,\beta}h_{\alpha\bar{\beta}}\dd
  z_{\alpha}\wedge \dd \bar{z}_{\beta},~
  h_{\alpha\bar{\beta}}=\frac{\pa^2 f}{\pa z_{\alpha}\pa
      \bar{z}_{\beta}},~
                      h_{\alpha\bar{\beta}}=\bar{h}_{\beta\bar{\alpha}},~\alpha,\beta=1,\dots,n,\label{KALP2}
  \end{align}
  \end{subequations}
where   $K(z,\bar{z})$
is  the scalar product of two  un-normalized Perelomov's  CS-vectors $e_{{z}}$ at
$z\in M$  \cite{sbj,SB15, perG}.

It is well known, see \cite[Theorem
4.17]{ball}, \cite[Proposition 20]{SB19}, \cite[(6), p. 156]{kn},
that the condition \begin{equation}\label{condH}\dd \omega=0\end{equation} for a Hermitian
manifold to have a \Ka~ structure is equivalent with the conditions
\begin{equation}\label{EQK}
  \frac{\pa h_{\alpha\bar{\beta}}}{\pa z_{\gamma}}= \frac{\pa
      h_{\gamma\bar{\beta}}}{\pa z_{\alpha}}, \quad\text{or}\quad \frac{\pa h_{\alpha\bar{\beta}}}{\pa z_{\gamma}}= \frac{\pa
      h_{\alpha\bar{\gamma}}}{\pa z_{\bar{\beta}}},\quad\alpha,\beta,\gamma =1,\dots,n.
 \end{equation}

   In accordance with \cite[p. 42]{ball}, \cite[p. 28]{green},
   \cite[Appendix B]{SB19}, the Riemannian metric associated with the Hermitian  metric on the manifold $M$ in local coordinates is
    \begin{equation}\label{asm}\dd
    s^2_{M}(z,\bar{z})=\sum_{\alpha,\beta}h_{\alpha\bar{\beta}}\dd
    z_{\alpha}\otimes\dd \bar{z}_{\beta}.\end{equation}
  Sometimes \cite[(7.4)]{CH67}, for a  metric like \eqref{asm},  the \Ka~ two-form is taken as follows, instead of \eqref{KALP2}:
\begin{equation}\label{KALP3}
    -\ii\omega_M=\frac{\ii}{2} \sum_{\alpha,\beta}h_{\alpha\bar{\beta}}\dd
  z_{\alpha}\wedge \dd \bar{z}_{\beta}~,~~~   h_{\alpha\bar{\beta}}=\frac{\pa^2 f}{\pa z_{\alpha}\pa
      \bar{z}_{\beta}}.
 \end{equation}

The choice of $f$ in \eqref{KALP3} corresponds to the situation where the so-called $\epsilon$-function  \cite{cahII, raw,Cah} is constant:
\begin{gather*}
\epsilon(z) := \e^{-f(z)}K_M(z,\bar{z}).
\end{gather*}
The corresponding $G$-invariant metric is called {\it balanced metric}. This denomination was firstly used in~\cite{don} for compact manifolds, then in \cite{arr} for noncompact manifolds, also in~\cite{alo} in the context of Berezin quantization on homogeneous bounded domain. We used it also in the case of the partially bounded domain $\mc{D}^J_n$ -- the Siegel--Jacobi ball~\cite{SB15}.

\subsection{Christoffel symbols, geodesic equations and connection matrices}

\

We assume these notions are well known by the reader from classical references (like \cite{ccl} or \cite{kn}), and we resume here only to giving some formulas needed in the main text.

On a homogeneous space $M$ of dimension $n$, if we consider the hermiticity condition \eqref{EQK} in \eqref{KALP2}  and the K\"ahlerian restrictions \eqref{condH},  the Christoffel symbols of the Chern connection (see \cite[\S 3.2]{ball}) are determined by (see also \cite[(12), p. 156]{kn}):
 \begin{equation}\label{CRISTU} \Gamma^{\gamma}_{\alpha\beta}=\bar{h}^{\gamma\bar{\epsilon}}\frac{\pa
  h_{\beta\bar{\epsilon}}}{\pa z^{\alpha}}=
h^{\epsilon\bar{\gamma}}\frac{\pa h_{\beta\bar{\epsilon}}}{\pa  z^{\alpha}},\quad
\text{where~ ~ ~ }
h_{\alpha\bar{\epsilon}}h^{\epsilon \bar{\beta}}=\delta_{\alpha\beta}, ~~\quad~~\alpha, \beta, \gamma,
\epsilon =1,\dots,n.
 \end{equation}
 Choose any coordinate system $(U,z^\alpha)$ of the n-dimensional affine connection space $M$. The transformation formula for the Christoffel coefficients under a change of coordinates $z^\alpha\to u^i$, where $\alpha,i=1,\dots,n$,  is given by (see \cite[(2.5), p.114]{ccl}, \cite[(2), p.27]{helg}):
 \begin{equation}      \label{CCC}
 {\Gamma}^{k}_{ij} =\Gamma^\gamma_{\alpha\beta}\frac{\pa u^k}{\pa  z^\gamma}\frac{\pa z^\alpha}{\pa u^i}\frac{\pa z^\beta}{\pa u^j}
   +\frac{\pa^2z^\alpha}{\pa u^i\pa z^j}
   \cdot \frac{\pa u^k}{\pa z^\alpha},\quad i,j,k,\alpha,\beta,\gamma = 1,\ldots,n.
        \end{equation}
Geodesic equations are given by the formulas \cite[(2.20), p.116]{ccl}:
\begin{equation}\label{geo}
\frac{\dd^2 u^i}{\dd t^2}+\Gamma^i_{jk}\frac{\dd u^j}{\dd t}\frac{\dd u^k}{\dd t}=0.
\end{equation}
The elements of the connection matrix $\theta (z^\alpha)$
are given by the formula \cite[p.114]{ccl}:
\begin{equation}\label{CM}
\theta_\alpha^\beta(z^\gamma)=\Gamma^\beta_{\alpha\gamma} \dd z^\gamma.
\end{equation}
 Under the change of coordinates $z^\alpha\to u^i$ \cite[(2.4), p.113]{ccl} we have:
\begin{equation}\label{CCM}
 {\theta}^j_i(u)=\dd \left(\frac{\pa z^\alpha}{\pa u^i}\right)
 \frac{\pa u^j}{\pa z^\alpha}+\frac{\pa z^\alpha}{\pa u^i}\frac{\pa u^j}{\pa z^\beta}\theta^\beta_\alpha(z).\\
\end{equation}

\subsection{Berry phase on homogeneous \Ka~ manifolds}

\begin{Proposition}\label{PR44}
  Let $H$ be the Hamiltonian of a quantum system
  $(\Psi, \got{H}, (,))$ on the homogeneous
manifold $M=G/H$ governed by the Schr\"odinger equation
\[
  H\Psi= \ii \dot{\Psi}.
\]
Let us introduce notation
\begin{equation}\label{PPSI}
\Psi =e^{\ii \varphi}\tilde{e}_z, \quad \varphi\in [0,2\pi).
\end{equation}
Then the phase   $\varphi$ is the sum \cite{swA}
\[
  \varphi= \varphi_D+\varphi_B\]
    between the dynamical phase $\varphi_D$ and the non-adiabatic Berry phase
    $\varphi_B$, where
    \[
      \varphi_D=-\int  {\mathcal  H}(t) \dd t,
    \]
     and ${\mathcal H}$ is the energy function attached to the Hamiltonian $H$
     \begin{equation}\label{ENN}
       {\mathcal H} =(\tilde{e}_z |H| \tilde{e}_z).\end{equation}

          The Berry phase is the closed integral of the  one-form $A_B$,
          called  Berry connection:
    \begin{equation}\label{BF}
      \varphi_B = \oint A_B,
    \end{equation}
    \begin{eqnarray}&&A_B \!=\!\frac{\ii}{2}\sum_{\alpha\in
        \Delta_{+n}}
      (\dd z_{\alpha}\pa_{\alpha}\!-\!\dd {\bar{z}_{\alpha}}\bar{\pa}_{\alpha})\log
      (e_z,e_z)
      \!=\!-\Im \theta_L,\label{BCON}\\
      && \theta_L:=\! \sum_{\alpha\in
        \Delta_{+n}}\!\!\pa_\alpha f(z,\bar{z}) \dd {z}_{\alpha}\!=\!\!\sum_{\alpha\in
        \Delta_{+n}}\!\!\pa_{\alpha}\log(e_z,e_z) \dd z_{\alpha} \!=\!
      \sum_{\alpha\in
        \Delta_{+n}}\frac{\pa_{\alpha}(e_z,e_z)}{(e_z,e_z)}\dd z_{\alpha},
      \end{eqnarray}
     and $f$ is the \Ka~ potential defined in \eqref{KALP1}.
 The Berry phase depends on the path and not on the Hamiltonian.
 Closed paths in $M$ imply line integrals over connection on the closed paths and are obtained through horizontal lift.
 If the motion is done on a closed path in M, it generates
 in the fiber in M the holonomy
\begin{equation}\label{HOL}
\phi_B= \oint A_B=\int_S \dd A_B,
\end{equation}
where $\dd A_B$ is the curvature of the fiber bundle, a realisation of
Simon's two-form  \cite{BS}:
\begin{equation}\label{DDP3}
  \begin{split}
  \dd A_B& =\frac{\ii}{2}\sum_{\alpha,\beta} (-\frac{\pa^2f}{\pa {z}_{\beta}\pa
    \bar{z}_{\alpha}}\dd z_{\beta}\wedge\dd \bar{z}_{\alpha}
  +\frac{\pa^2 f}{\pa\bar{z}_{\beta}\pa z_{\alpha}}\dd
  \bar{z}_{\beta}\wedge \dd z_{\alpha})\\ &=
  -\ii \sum_{\alpha,\beta}\frac{\pa^2\log
    (e_z,e_z)}{\pa z_{\alpha}\pa \bar{z}_{\beta}}\dd z_{\alpha}\wedge
  \dd \bar{z}_{\beta}=-\omega_M(z,\bar{z}).
\end{split}
\end{equation}

  \begin{proof}
    The Proposition is taken from \cite[(4.17)]{sbcag},
        \cite[Proposition]{sbl}, \cite[corrected Proposition 4.1]{FC}, see also \cite[(15)]{GO}.
The expresion \eqref{BF} of the Berry phase corresponds to the
        parallel transport, i.e. the vector
        \begin{equation}\label{PSIBAR}
          |\underline{Z})=e^{\ii \varphi_B}\tilde{e}_z, \quad
\tilde{e}_z:=(e_z,e_z)^{-\frac{1}{2}}e_z
        \end{equation}
in \eqref{PPSI} has the property  that
 $(\underline{Z},\dot{\underline{Z}}) =0$
\cite[p. 2365]{sbl}. Also, one uses relation:
\begin{equation}\label{SUMP}
  \dot e_z=\sum_{\alpha}\frac{\pa e_z}{\pa
    z_{\alpha}}\dot  z_{\alpha} \quad ,\quad \dd e_{z}=
  \sum_{\alpha}\frac{\pa e_z}{\pa z_{\alpha}}\dd z_{\alpha}.
\end{equation}
The proof of expression \eqref{DDP3}
is a consequence of the relation:
\[
  f= \sum_{j=1}^nf_j\dd x_j\Longrightarrow \dd f= \sum_{j=1}^n\sum_{i=1}^n
  \frac{\pa f_j}{\pa x_i}\dd x_i\wedge\dd x_j,
\]
where $f$ is a smooth  function of $x_1,\dots,x_n$.
\end{proof}

\begin{Remark}\label{RRR}
  Equation \eqref{BCON} of $A_B$  can be written with formula \eqref{SUMP}
  as
  \begin{equation}\label{ruc}
    A_B=-\Im\frac{(e_z|\pa|e_z)}{(e_z,e_z)}, ~~\pa f=\frac{\pa f}{\pa
        z_{\alpha}}\dd z_{\alpha}.
    \end{equation}
    Equation  \eqref{ruc} is exactly \cite[(16) page 10]{swA} or \cite[(2.56)]{CH}
    \begin{equation}
A^{(n)}=- \Im\frac{<n|\pa|n>}{<n|n>}, \quad <n|n>=1.
   \end{equation}
Equation \eqref{DDP3} of $\dd A_B$ can be written as
    \[
      \dd A^{(n)} =-\Im \frac{(\pa e_z|\wedge|\pa e_z)}{(e_z,e_z)}.
      \]
Equation \eqref{DDP3} of $\dd A^{(n)}$ can be written as  \cite[(13) page
  10]{swA} or  \cite[(2.62), (2.63)]{CH}
\begin{equation}
  \begin{split}
    \mc{F}^{(n)}& =\dd A^{(n)}=-\Im\frac{<\dd n|\wedge|\dd n>}{<n|n>}
      =\frac{1}{2}\frac{\mc{F}^{(n)}_{ij}}{<n|n>}\dd
x_i\wedge \dd x_j\\ &=
-\Im (\frac{<\pa_i n|\pa_j n>-<\pa _jn|\pa_i n>}{<n|n>}) \dd x_i\wedge \dd x_j, <n|n>=1.
\end{split}
\end{equation}
\end{Remark}
\end{Proposition}

\subsection*{Acknowledgements}
The authors acknowledge support from the Project PN 23 21 01 01/2023 financed by the Romanian Ministry of Education and Research.

\end{document}